\documentclass[9pt,oneside,english]{amsart}
\usepackage[T1]{fontenc}
\usepackage[latin9]{inputenc}
\usepackage{geometry}
\geometry{verbose,tmargin=3cm,bmargin=3cm,lmargin=2.5cm,rmargin=2.5cm}
\usepackage{amsthm}
\usepackage{amsbsy}
\usepackage{amstext}
\usepackage{amssymb}

\makeatletter
\numberwithin{equation}{section}
\numberwithin{figure}{section}
  \theoremstyle{plain}
  \newtheorem*{thm*}{\protect\theoremname}
\theoremstyle{plain}
\newtheorem{thm}{\protect\theoremname}
  \theoremstyle{definition}
  \newtheorem{defn}[thm]{\protect\definitionname}
  \theoremstyle{plain}
  \newtheorem{prop}[thm]{\protect\propositionname}
  \theoremstyle{remark}
  \newtheorem{rem}[thm]{\protect\remarkname}
  \theoremstyle{plain}
  \newtheorem{cor}[thm]{\protect\corollaryname}
  \theoremstyle{definition}
  \newtheorem{example}[thm]{\protect\examplename}
  \theoremstyle{plain}
  \newtheorem{lem}[thm]{\protect\lemmaname}

\usepackage{xypic}
\usepackage{pstricks,pst-node,pst-tree}
\theoremstyle{definition}
\newtheorem{parn}{}[subsection]

\makeatother

\usepackage{babel}
  \providecommand{\corollaryname}{Corollary}
  \providecommand{\definitionname}{Definition}
  \providecommand{\examplename}{Example}
  \providecommand{\lemmaname}{Lemma}
  \providecommand{\propositionname}{Proposition}
  \providecommand{\remarkname}{Remark}
  \providecommand{\theoremname}{Theorem}
\providecommand{\theoremname}{Theorem}

\begin{document}

\author{Adrien Dubouloz}

\address{Institut de Math\'ematiques de Bourgogne, 9 avenue Alain Savary,
21 078 Dijon, France}

\email{adrien.dubouloz@u-bourgogne.fr}

\thanks{This project was partialy funded by ANR Grant \textquotedbl{}BirPol\textquotedbl{}
ANR-11-JS01-004-01. }

\subjclass[2000]{14R05; 14L30.}

\dedicatory{Dedicated to Wlodek Danielewski. }

\title{On the cancellation problem for algebraic tori}
\begin{abstract}
We address a variant of Zariski Cancellation Problem, asking whether
two varieties which become isomorphic after taking their product with
an algebraic torus are isomorphic themselves. Such cancellation property
is easily checked for curves, is known to hold for smooth varieties
of log-general type by virtue of a result of Iitaka-Fujita and more
generally for non $\mathbb{A}_{*}^{1}$-uniruled varieties. We show
in contrast that for smooth affine factorial $\mathbb{A}_{*}^{1}$-ruled
varieties, cancellation fails in any dimension bigger or equal to
two. 
\end{abstract}
\maketitle
Since the late seventies, the Cancellation Problem is usually understood
in its geometric form as the question whether two algebraic varieties
$X$ and $Y$ with isomorphic cylinders $X\times\mathbb{A}^{1}$ and
$Y\times\mathbb{A}^{1}$ are isomorphic themselves. This problem is
intimately related to the geometry of rational curves on $X$ and
$Y$: in particular, if $X$ or $Y$ are smooth quasiprojective and
not $\mathbb{A}^{1}$-uniruled, in the sense that they do not admit
any dominant generically finite morphism from a variety of the form
$Z\times\mathbb{A}^{1}$, then every isomorphism $\Phi:X\times\mathbb{A}^{1}\stackrel{\sim}{\rightarrow}Y\times\mathbb{A}^{1}$
descends to an isomorphism between $X$ and $Y$, a property which
is sometimes called strong cancellation. Over an algebraically closed
field of characteristic zero, the non $\mathbb{A}^{1}$-uniruledness
of a smooth quasi-projective variety $X$ is guaranteed in particular
by the existence of pluri-forms with logarithmic poles at infinity
on suitable projective completions of $X$, a property which can be
read off from the non-negativity of a numerical invariant of $X$,
called its (logarithmic) Kodaira dimension $\kappa(X)$, introduced
by S. Iitaka \cite{Ii77} as the analogue of the usual notion of Kodaira
dimension for complete varieties. In this setting, it was established
by S. Iitaka et T. Fujita \cite{IiFu77} that strong cancellation
does hold for a large class of smooth varieties, namely whenever $X$
or $Y$ has non-negative Kodaira dimension. This general result implies
in particular that cancellation holds for smooth affine curves, due
to the fact that the affine line $\mathbb{A}^{1}$ is the only such
curve with negative Kodaira dimension. 

All these assumptions turned out to be essential, as shown by a famous
unpublished counter-example due to W. Danielewski \cite{Dan89} of
a pair of non-isomorphic smooth complex $\mathbb{A}^{1}$-ruled affine
surfaces with isomorphic cylinders. The techniques introduced by W.
Danielewski have been the source of many progress on the Cancellation
Problem during the last decade but, except for the case of the affine
plane $\mathbb{A}^{2}$ which was solved earlier affirmatively by
M. Miyanishi and T. Sugie \cite{MiSu80}, the question whether cancellation
holds for the complex affine space $\mathbb{A}^{n}$ remains one of
the most challenging and widely open problem in affine algebraic geometry.
In contrast, the same question in positive characteristic was recently
settled by the negative by N. Gupta \cite{Gu14}, who checked using
algebraic methods developed by A. Crachiola and L. Makar-Limanov that
a three-dimensional candidate constructed by T. Asanuma \cite{As87}
was indeed a counter-example. \\

In this article, we consider another natural cancellation problem
in which $\mathbb{A}^{1}$ is replaced by the punctured affine line
$\mathbb{A}_{*}^{1}\simeq\mathrm{Spec}(\mathbb{C}[x^{\pm1}])$ or,
more generally, by an algebraic torus $\mathbb{T}^{n}=\mathrm{Spec}(\mathbb{C}[x_{1}^{\pm1},\ldots,x_{n}^{\pm1}])$,
$n\geq1$. The question is thus whether two, say smooth quasi-projective,
varieties $X$ and $Y$ such that $X\times\mathbb{T}^{n}$ is isomorphic
to $Y\times\mathbb{T}^{n}$ are isomorphic themselves. In contrast
with the usual Cancellation Problem, this version seems to have received
much less attention, one possible reason being that the analogue in
this context of the Cancellation Problem for $\mathbb{A}^{n}$, namely
the question whether an affine variety $X$ such that $X\times\mathbb{T}^{n}$
is isomorphic to $\mathbb{T}^{n+m}$ is itself isomorphic to the torus
$\mathbb{T}^{n}$, admits an elementary positive answer derived from
the knowledge of the structure of the automorphism groups of algebraic
tori: indeed, the action of the torus $\mathbb{T}^{n}=\mathrm{Spec}(\mathbb{C}[M'])$
by translations on the second factor of $X\times\mathbb{T}^{n}$ corresponds
to a grading of the algebra of the torus $\mathbb{T}^{n+m}=\mathrm{Spec}(\mathbb{C}[M])$
by the lattice $M'$ of characters of $\mathbb{T}^{n}$, induced by
a surjective homomorphism $\sigma:M\rightarrow M'$ from the lattice
of characters $M$ of $\mathbb{T}^{n+m}$. Letting $\tau:M'\rightarrow M$
be a section of $\sigma$, $M''=M/\tau(M')$ is a lattice of rank
$m$ for which we have isomorphisms of algebraic quotients $X\simeq X\times\mathbb{T}^{n}/\!/\mathbb{T}^{n}\simeq\mathbb{T}^{n+m}/\!/\mathbb{T}^{n}\simeq\mathbb{T}^{m}=\mathrm{Spec}(\mathbb{C}[M''])$.

Without such precise information on automorphism groups, the question
for general varieties $X$ and $Y$ is more complicated. Of course,
appropriate conditions on the structure of invertible functions on
$X$ and $Y$ can be imposed to guarantee that cancellation holds
(see \cite{Fr14} for a detailed discussion of this point of view):
this is the case for instance when either $X$ or $Y$ does not have
non constant such functions. Indeed, given an isomorphism $\Phi:X\times\mathbb{A}_{*}^{1}\stackrel{\sim}{\rightarrow}Y\times\mathbb{A}_{*}^{1}$,
the restriction to every closed fiber of the first projection $\mathrm{pr}_{1}:X\times\mathbb{A}_{*}^{1}\rightarrow X$
of the composition of $\Phi$ with the second projection $\mathrm{pr}_{2}:Y\times\mathbb{A}_{*}^{1}\rightarrow\mathbb{A}_{*}^{1}$
induces an invertible function on $X$, implying that $\Phi$ descends
to an isomorphism between $X$ and $Y$ as soon as every such function
on $X$ is constant. 

But from a geometric point of view, it seems that the cancellation
property for $\mathbb{A}_{*}^{1}$ is again related to the nature
of affine rational curves contained in the varieties $X$ and $Y$,
more specifically to the geometry of images of the punctured affine
line $\mathbb{A}_{*}^{1}$ on them. It is natural to expect that strong
cancellation holds for varieties which are not dominantly covered
by images of $\mathbb{A}_{*}^{1}$, but this property is harder to
characterize in terms of numerical invariants. In particular, in every
dimension $\geq2$, there exists smooth $\mathbb{A}_{*}^{1}$-uniruled
affine varieties $X$ of any Kodaira dimension $\kappa(X)\in\{-\infty,0,1,\ldots,\dim X-1\}$.
In contrast, a smooth complex affine variety $X$ of log-general type,
i.e. of maximal Kodaira dimension $\kappa(X)=\dim X$, is not $\mathbb{A}_{*}^{1}$-uniruled,
and another general result of I. Itaka and T. Fujita \cite{IiFu77}
does indeed confirm that strong cancellation holds for products of
algebraic tori with smooth affine varieties of log-general type. Combined
with the fact that $\mathbb{A}_{*}^{1}$ is the unique smooth affine
curve of Kodaira dimension $0$, this is enough for instance to conclude
that cancellation holds for smooth affine curves. 

Our main result, which can be summarized as follows, shows that similarly
as in the case of the usual Cancellation Problem for $\mathbb{A}^{1}$,
these assumptions are essential:
\begin{thm*}
In every dimension $d\geq2$, there exists non isomorphic smooth factorial
affine $\mathbb{A}_{*}^{1}$-uniruled varieties $X$ and $Y$ of dimension
$d$ and Kodaira dimension $d-1$ with isomorphic $\mathbb{A}_{*}^{1}$-cylinders
$X\times\mathbb{A}_{*}^{1}$ and $Y\times\mathbb{A}_{*}^{1}$. 
\end{thm*}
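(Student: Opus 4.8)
The plan is to realise $X$ and $Y$ as members of a family $(X_{n})_{n\geq 1}$ of \emph{generalised Danielewski varieties}: smooth affine varieties of dimension $d$ carrying an $\mathbb{A}_{*}^{1}$-fibration with a single degenerate fibre whose combinatorial type depends on a discrete parameter $n$, obtained from a cylinder over a suitable $(d-1)$-dimensional base by an affine modification — for $d=2$ these are the $\mathbb{A}_{*}^{1}$-analogues of the classical Danielewski surfaces $\{x^{n}z=p(y)\}$. Once the $X_{n}$ are built, four properties must be verified. Smoothness is arranged by a transversality condition in the modification. That $X_{n}$ is $\mathbb{A}_{*}^{1}$-uniruled, in fact $\mathbb{A}_{*}^{1}$-ruled, is immediate: a general fibre of the fibration already gives a dominant open immersion $U\times\mathbb{A}_{*}^{1}\hookrightarrow X_{n}$ for some open $U$. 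That $X_{n}$ is factorial with $\mathcal{O}(X_{n})^{*}=\mathbb{C}^{*}$ follows from an excision computation for divisor class groups — the components of the degenerate fibre are the only possible new classes, and the principality of the pullback of the equation of the degeneration locus together with that of the fibrewise coordinate has to kill them — and from a count of invertible functions reduced to the base. Finally $\kappa(X_{n})=d-1$: on the one hand $\kappa(X_{n})\leq d-1$, since $X_{n}$, being $\mathbb{A}_{*}^{1}$-uniruled, is a generically finite image of a product $Z\times\mathbb{A}_{*}^{1}$, so that $\kappa(X_{n})\leq\kappa(Z\times\mathbb{A}_{*}^{1})=\kappa(Z)\leq\dim Z=d-1$; on the other hand $\kappa(X_{n})\geq d-1$, because the logarithmic analogue of Kodaira's canonical bundle formula for $\mathbb{A}_{*}^{1}$-fibrations produces, out of the degeneration data — chosen heavy enough that the associated orbifold base of dimension $d-1$ is of log-general type — enough logarithmic pluricanonical forms on $X_{n}$.

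Next I would prove $X_{n}\times\mathbb{A}_{*}^{1}\cong X_{m}\times\mathbb{A}_{*}^{1}$ for all $n,m\geq 1$ by a Danielewski-type untwisting argument. Writing $\mathbb{A}_{*}^{1}=\mathbb{G}_{m}$, the variety $X_{n}\times\mathbb{G}_{m}$ is obtained by gluing two standard charts over the $(d-1)$-dimensional base along their common overlap, and the extra $\mathbb{G}_{m}$-factor supplies exactly the freedom needed to transform the gluing cocycle of $X_{n}$ — which encodes $n$ — into that of $X_{m}$ by elementary operations involving the units of the overlap ring, in the same way that a non-trivial rank-one vector-group torsor becomes trivial after adding a free factor in Danielewski's original argument; equivalently, $X_{n}\times\mathbb{G}_{m}$ and $X_{m}\times\mathbb{G}_{m}$ are presented as complements of isomorphic divisors in one and the same ambient variety. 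Taking $X=X_{1}$ and $Y=X_{2}$ then yields the required isomorphic $\mathbb{A}_{*}^{1}$-cylinders.

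It remains to show $X_{n}\not\cong X_{m}$ for $n\neq m$. Since $\kappa(X_{n})=d-1=\dim X_{n}-1$, the variety $X_{n}$ is not $\mathbb{A}^{1}$-uniruled (the criterion recalled in the introduction), and its $\mathbb{A}_{*}^{1}$-fibration, whose general fibre has Kodaira dimension $0$, should be essentially unique — coinciding up to automorphisms with the logarithmic Iitaka fibration. This is classical for open surfaces, and in dimension $\geq 3$ it rests on an equality case of the logarithmic subadditivity of Kodaira dimension, in the spirit of Kawamata's work on the $C_{n,m}$ conjecture. Granting it, any isomorphism $X_{n}\stackrel{\sim}{\rightarrow}X_{m}$ is compatible with the fibrations, hence with minimal SNC completions, hence preserves the weighted dual graph of the boundary at infinity, an invariant which by construction genuinely depends on $n$ — exactly as the non-isomorphism of the classical Danielewski surfaces is detected at the boundary. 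I expect the principal obstacle to lie in the first step: the conditions that $\kappa(X_{n})=d-1$, that $X_{n}$ be factorial, and that its only invertible functions be constants pull against one another, and reconciling them while keeping $X_{n}$ genuinely $\mathbb{A}_{*}^{1}$-ruled and retaining a usable discrete parameter is precisely what forces the generalised Danielewski shape of the construction and forms its technical core; establishing the canonicity of the $\mathbb{A}_{*}^{1}$-fibration in dimension $\geq 3$ is the second delicate point.
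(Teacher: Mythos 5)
Your overall strategy (a Danielewski-style fiber product trick to identify the cylinders, plus a descent argument to distinguish the total spaces) is the right one and is indeed the skeleton of the paper's proof, but there are two genuine problems with the proposal as written. The first is fatal to your construction: you insist that $\mathcal{O}(X_{n})^{*}=\mathbb{C}^{*}$. No such counterexample can exist. If $X$ has only constant invertible functions, then for any isomorphism $\Phi:X\times\mathbb{A}_{*}^{1}\stackrel{\sim}{\rightarrow}Y\times\mathbb{A}_{*}^{1}$ the unit $\mathrm{pr}_{2}\circ\Phi$ restricts to a constant on each slice $X\times\{t\}$ (units of $X\times\mathbb{A}_{*}^{1}$ are of the form $ct^{k}$), so $\Phi$ maps slices to slices and descends to an isomorphism $X\stackrel{\sim}{\rightarrow}Y$; that is, cancellation by $\mathbb{A}_{*}^{1}$ \emph{holds} for varieties without nonconstant units (this is the elementary observation in the introduction, cf.\ Freudenburg's Laurent cancellation). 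The tension you sense between factoriality, $\kappa=d-1$ and triviality of units is not a technical obstacle to be overcome: the last condition must be dropped. The counterexamples of the paper all carry nonconstant units (e.g.\ $x^{5}-y^{2}$ on $S_{1}=\mathbb{A}^{2}\setminus\{x^{5}=y^{2}\}$), and they are not modelled on Danielewski surfaces with one reduced degenerate fibre (which would indeed have trivial units and, over $\mathbb{A}^{1}$, the wrong Kodaira dimension); instead the discrete parameter lives in a finite Picard group, or in the multiplicities of degenerate fibres.

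The second problem is that the two steps you defer --- the actual construction, and the ``canonicity of the $\mathbb{A}_{*}^{1}$-fibration in dimension $\geq3$'' via an equality case of log subadditivity --- are precisely where the content lies, and the second tool is not available off the shelf. The paper circumvents both. In dimension $d\geq3$ it takes $X=\mathbb{P}^{r}\setminus D$ with $D$ a smooth hypersurface of degree $d\geq r+2$ (log-general type, $\mathrm{Pic}(X)\simeq\mathbb{Z}/d\mathbb{Z}$) and lets $P,Q$ be the $\mathbb{G}_{m}$-torsors of classes $1$ and $k$ with $k$ prime to $d$, $k\not\equiv\pm1$: since the two classes generate the same subgroup, each pulls back trivially to the other's total space, so $P\times_{X}Q\simeq P\times\mathbb{A}_{*}^{1}\simeq Q\times\mathbb{A}_{*}^{1}$; non-isomorphy follows from an elementary Iitaka--Fujita-type descent for locally trivial torus bundles over log-general-type bases (any abstract isomorphism sends fibres to fibres because it cannot map $p^{-1}(D')$ dominantly to a log-general-type base), together with the fact that $\mathrm{Aut}(X)$ acts trivially on $\mathrm{Pic}(X)$ and the residual $\mathrm{GL}_{1}(\mathbb{Z})=\{\pm1\}$ ambiguity is excluded by the choice of $k$. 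No Iitaka fibration or $C_{n,m}$-type statement is needed. In dimension $2$, where no rational log-general-type curve has nontrivial Picard group, the base is replaced by the orbifold curve $\mathbb{A}^{1}[2,5]$, the parameter is encoded in two explicit $\mathbb{A}_{*}^{1}$-fibred factorial surfaces with multiple fibres of multiplicities $2$ and $5$ (forcing $\kappa=1$ via the orbifold base), the fiber product is trivialized over each factor by factoriality and Hilbert 90, and non-isomorphy is read off the intersection forms of rigid SNC boundaries --- close in spirit to your last step, but without any appeal to uniqueness of the Iitaka fibration.
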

In dimension $d\geq3$, these families are obtained in the form of
total spaces of suitable Zariski locally trivial $\mathbb{A}_{*}^{1}$-bundles
over smooth affine varieties of log-general type. The construction
guarantees the isomorphy between the corresponding $\mathbb{A}_{*}^{1}$-cylinders
thanks to a fiber product argument reminiscent to the famous Danielewski
fiber product trick in the case of the usual Cancellation Problem.
The two-dimensional counter-examples are produced along the same lines,
at the cost of replacing the base varieties of the $\mathbb{A}_{*}^{1}$-bundles
involved in the construction by appropriate orbifold curves. The article
is organized as follows: in the first section, we establish a variant
of Iitaka-Fujita strong cancellation Theorem for Zariski locally trivial
$\mathbb{T}^{n}$-bundles over smooth affine varieties of log-general
type. This criterion is applied in the second section to deduce the
existence of families of Zariski locally trivial $\mathbb{A}_{*}^{1}$-bundles
over smooth affine varieties of log-general type with non isomorphic
total spaces but isomorphic $\mathbb{A}_{*}^{1}$-cylinders. The two-dimensional
case is treated in a separate sub-section. The last section contains
a generalization of some of these constructions to the cancellation
problem for higher dimensional tori $\mathbb{T}^{n}$ over varieties
of dimension at least three, and a complete discussion of the cancellation
problem for $\mathbb{A}_{*}^{1}$ in the special case of smooth factorial
affine surfaces.

\section{A criterion for cancellation}

\subsection{Recollection on locally trivial $\mathbf{T}^{n}$-bundles }

In what follows, we denote by $\mathbf{T}^{n}$ the spectrum of the
Laurent polynomial algebra $\mathbb{C}[t_{1}^{\pm1},\ldots,t_{n}^{\pm1}]$
in $n$ variables. We use the notation $\mathbb{T}^{n}$ to indicate
that we consider $\mathbf{T}^{n}$ as the product $\mathbb{G}_{m}^{n}$,
i.e. $\mathbf{T}^{n}$ equipped with its natural algebraic group product
structure. The automorphism group $\mathrm{Aut}(\mathbf{T}^{n})$
of $\mathbf{T}^{n}$ is isomorphic to the semi-direct product $\mathbb{T}^{n}\rtimes\mathrm{GL}_{n}(\mathbb{Z})$,
where $\mathbb{T}^{n}$ acts on $\mathbf{T}^{n}$ by translations
and where $\mathrm{GL}_{n}(\mathbb{Z})$ acts by $(a_{ij})_{i,j=1,\ldots,n}\cdot(t_{1},\ldots,t_{n})=(\prod_{i=1}^{n}t_{i}^{a_{1i}},\ldots,\prod_{i=1}^{n}t_{i}^{a_{ni}})$. 
\begin{defn}
A \emph{Zariski locally trivial} $\mathbf{T}^{n}$-\emph{bundle} over
a scheme $X$, is an $X$-scheme $p:P\rightarrow X$ for which every
point of $X$ has a Zariski open neighbourhood $U\subset X$ such
that $p^{-1}\left(U\right)\simeq U\times\mathbf{T}^{n}$ as schemes
over $U$. 
\end{defn}
\begin{parn} \label{par:isoclass-exact-sequence} Isomorphy classes
of Zariski locally trivial $\mathbf{T}^{n}$-bundle over $X$ are
in one-to-one correspondence with elements of the \v{C}ech cohomology
group $\check{H}^{1}(X,\mathrm{Aut}(\mathbf{T}^{n}))$. Furthermore,
letting $\mathrm{GL}_{n}(\mathbb{Z})_{X}$ denote the locally constant
sheaf $\mathrm{GL}_{n}(\mathbb{Z})$ on $X$, we derive from the short
exact sequence $0\rightarrow\mathbb{T}^{n}\rightarrow\mathrm{Aut}(\mathbf{T}^{n})\rightarrow\mathrm{GL}_{n}(\mathbb{Z})_{X}\rightarrow0$
of sheaves over $X$ the following long exact sequence in \v{C}ech
cohomology 
\[
0\rightarrow\check{H}^{0}(X,\mathbb{T}^{n})\rightarrow\check{H}^{0}(X,\mathrm{Aut}(\mathbf{T}^{n}))\rightarrow\check{H}^{0}(X,\mathrm{GL}_{n}(\mathbb{Z})_{X})\rightarrow\check{H}^{1}(X,\mathbb{T}^{n})\rightarrow\check{H}^{1}(X,\mathrm{Aut}(\mathbf{T}^{n}))\rightarrow\check{H}^{1}(X,\mathrm{GL}_{n}(\mathbb{Z})_{X}).
\]
If $X$ is irreducible, then $\check{H}^{1}(X,\mathrm{GL}_{n}(\mathbb{Z})_{X})=0$
and so, every Zariski locally trivial $\mathbf{T}^{n}$-bundle can
be equipped with the additional structure of a principal homogeneous
$\mathbb{T}^{n}$-bundle. Moreover, two principal homogeneous $\mathbb{T}^{n}$-bundles
have isomorphic underlying $\mathbf{T}^{n}$-bundles if and only if
their isomorphy classes in $\check{H}^{1}(X,\mathbb{T}^{n})\simeq H^{1}(X,\mathbb{T}^{n})$
belong to the same orbit of the natural action of $\check{H}^{0}(X,\mathrm{GL}_{n}(\mathbb{Z})_{X})\simeq\mathrm{GL}_{n}(\mathbb{Z})$
which, for every $(a_{ij})_{i,j=1,\ldots n}\in\mathrm{GL}_{n}(\mathbb{Z})$,
sends the isomorphy class of the $\mathbb{T}^{n}$-bundle $p:P\rightarrow X$
with action $\mathbb{T}^{n}\times P\rightarrow P$, $\left((t_{1},\cdots,t_{n}),p\right)\mapsto(t_{1},\ldots,t_{n})\cdot p$
to the isomorphy class of $p:P\rightarrow X$ equipped with the action
$((t_{1},\ldots t_{n}),p)\mapsto((a_{ij})_{i,j=1,\ldots,n}\cdot(t_{1},\ldots,t_{n}))\cdot p$.
In other words, for an irreducible $X$, isomorphy classes of Zariski
locally trivial $\mathbf{T}^{n}$-bundles over $X$ are in one-to-one
correspondence with elements of $H^{1}(X,\mathbb{T}^{n})/\mathrm{GL}_{n}(\mathbb{Z})$. 

\end{parn}

\subsection{Cancellation for $\mathbf{T}^{n}$-bundles over varieties of log-general
type}

Recall that the \emph{(logarithmic) Kodaira dimension} $\kappa(X)$
of a smooth complex algebraic variety $X$ is the Iitaka dimension
of the invertible sheaf $\omega_{\overline{X}/\mathbb{C}}(\log B)=(\det\Omega_{\overline{X}/\mathbb{C}}^{1})\otimes\mathcal{O}_{\overline{X}}(B)$
on a smooth complete model $\overline{X}$ of $X$ with reduced SNC
boundary divisor $B=\overline{X}\setminus X$. So $\kappa(X)$ is
equal to $\mbox{\ensuremath{\mathrm{tr}}.\ensuremath{\deg}}_{\mathbb{C}}(\bigoplus_{m\geq0}H^{0}(\overline{X},\omega_{\overline{X}/\mathbb{C}}(\log B)^{\otimes m}))-1$
if $H^{0}(\overline{X},\omega_{\overline{X}/\mathbb{C}}(\log B)^{\otimes m})\neq0$
for sufficiently large $m$, and, by convention to $-\infty$ otherwise.
The so-defined element of $\{-\infty\}\cup\{0,\ldots,\mathrm{dim}_{\mathbb{C}}X\}$
is independent of the choice of a smooth complete model $(\overline{X},B)$
\cite{Ii77} and coincides with the usual Kodaira dimension in the
case where $X$ is complete. A smooth variety $X$ such that $\kappa(X)=\mathrm{dim}_{\mathbb{C}}X$
is said to be of log-general type. 

The following Proposition is a variant for Zariski locally trivial
bundles of Iitaka-Fujita's strong Cancellation Theorem \cite[Theorem 3]{IiFu77}
for products of varieties of log-general type with affine varieties
of Kodaira dimension equal to $0$, such as algebraic tori $\mathbf{T}^{n}$. 
\begin{prop}
\label{prop:Bundle-Cancel} Let $X$ and $Y$ be smooth algebraic
varieties and let $p:P\rightarrow X$ and $q:Q\rightarrow Y$ be Zariski
locally trivial $\boldsymbol{T}^{n}$-bundles. If either $X$ or $Y$
is of log-general type then for every isomorphism of abstract algebraic
varieties $\Phi:P\stackrel{\sim}{\rightarrow}Q$ between the total
spaces of $P$ and $Q$, there exists an isomorphism $\varphi:X\stackrel{\sim}{\rightarrow}Y$
such that the diagram 
\begin{eqnarray*}
P & \stackrel{\Phi}{\rightarrow} & Q\\
p\downarrow &  & \downarrow q\\
X & \stackrel{\varphi}{\rightarrow} & Y
\end{eqnarray*}
commutes. \end{prop}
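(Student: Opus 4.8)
\emph{Strategy.} The idea is to recover each bundle projection from its total space by means of logarithmic pluricanonical forms, using that the fibre $\mathbf{T}^{n}$ has logarithmic Kodaira dimension $0$; this produces a birational descent of $\Phi$, which is then promoted to a biregular one thanks to the Zariski local triviality of $p$ and $q$. Since $X$ is smooth its connected components are irreducible, and as $\Phi$ permutes the connected components of $P$ and $Q$ it suffices to treat each of them; so we may assume $X$, $Y$, $P$, $Q$ irreducible and, say, $X$ of log-general type. Fix a smooth complete model $(\overline{X},B_{X})$ of $X$ with SNC boundary. By a standard construction --- gluing the relative completions $(\mathbb{P}^{1})^{n}$ of the trivialising charts $U_{i}\times\mathbf{T}^{n}$ of $P$ and resolving --- one obtains a smooth complete model $(\overline{P},B_{P})$ of $P$ with SNC boundary together with a morphism $\overline{p}\colon\overline{P}\to\overline{X}$ extending $p$ whose generic fibre over $X$ is a smooth SNC completion of $\mathbf{T}^{n}$. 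The crucial geometric input is that the relative logarithmic canonical sheaf of $\overline{p}$ is trivial along these fibres, a trivialisation being the invariant form $\bigwedge_{i}dt_{i}/t_{i}$ (indeed $\omega_{(\mathbb{P}^{1})^{n}}(\log D)\simeq\mathcal{O}$ for the toric boundary $D$). Restricting an element of $H^{0}(\overline{P},\omega_{\overline{P}}(\log B_{P})^{\otimes m})$ to the generic fibre of $\overline{p}$ lands, since $\kappa(\mathbf{T}^{n})=0$, in the one-dimensional space $\mathbb{C}(X)\cdot(\bigwedge_{i}dt_{i}/t_{i})^{\otimes m}$; dividing by $(\bigwedge_{i}dt_{i}/t_{i})^{\otimes m}$ one checks that this yields, for every $m\geq0$, a natural isomorphism
\[
H^{0}(\overline{X},\omega_{\overline{X}}(\log B_{X})^{\otimes m})\;\stackrel{\sim}{\longrightarrow}\;H^{0}(\overline{P},\omega_{\overline{P}}(\log B_{P})^{\otimes m}),\qquad\eta\mapsto p^{*}\eta\otimes(\textstyle\bigwedge_{i}dt_{i}/t_{i})^{\otimes m},
\]
compatible with multiplication of sections. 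In particular $\kappa(P)=\kappa(X)$, and for $m\gg0$ the $m$-th logarithmic pluricanonical map of $P$ equals $\Psi^{X}_{m}\circ p$, where $\Psi^{X}_{m}\colon X\dashrightarrow\mathbb{P}(H^{0}(\overline{X},\omega_{\overline{X}}(\log B_{X})^{\otimes m})^{\vee})$ is birational onto its image because $X$ is of log-general type. (This is the Iitaka--Fujita analysis of logarithmic pluricanonical maps of $\mathbf{T}^{n}$-fibre spaces, cf. \cite{IiFu77}, carried over to the bundle setting.)

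\emph{Birational descent.} An isomorphism of varieties $\Phi\colon P\stackrel{\sim}{\to}Q$ induces, for every $m$, a linear isomorphism $\Phi^{*}$ between the spaces of logarithmic $m$-pluricanonical forms of $Q$ and of $P$ (these depend only on $P$, resp. $Q$, by \cite{Ii77}), compatible with multiplication. Combining this with the previous step applied to $P/X$ and to $Q/Y$ gives a graded isomorphism between the logarithmic pluricanonical algebras of $X$ and $Y$; in particular $\kappa(Y)=\kappa(X)=\dim X=\dim Y$, so $Y$ is of log-general type as well, and for $m\gg0$ we obtain a commutative square of rational maps
\[
\begin{array}{ccc}
P & \stackrel{\Phi}{\dashrightarrow} & Q\\
\Psi^{X}_{m}\circ p\downarrow & & \downarrow\Psi^{Y}_{m}\circ q\\
W_{X} & \stackrel{\mu}{\longrightarrow} & W_{Y}
\end{array}
\]
in which $W_{X}$, $W_{Y}$ are the images of $\Psi^{X}_{m}$, $\Psi^{Y}_{m}$ and $\mu$ is the restriction of the linear isomorphism dual to $\Phi^{*}$. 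Since $\Psi^{X}_{m}$ and $\Psi^{Y}_{m}$ are birational onto their images, the rational map $\varphi:=(\Psi^{Y}_{m})^{-1}\circ\mu\circ\Psi^{X}_{m}\colon X\dashrightarrow Y$ is birational and satisfies $q\circ\Phi=\varphi\circ p$ as rational maps $P\dashrightarrow Y$.

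\emph{From birational to biregular.} Let $X_{0}\subseteq X$ be the domain of definition of $\varphi$, a dense open; then $p^{-1}(X_{0})$ is a dense open of $P$, irreducible because $p$ is Zariski locally trivial. On $p^{-1}(X_{0})$ both $q\circ\Phi$ and $\varphi\circ p$ are morphisms to $Y$ and they agree on a dense open, hence on all of $p^{-1}(X_{0})$ by separatedness. Now fix $x\in X$ and an affine open $U\ni x$ over which $p$ is trivial, with unit section $s\colon U\to p^{-1}(U)\subseteq P$; the morphism $h:=q\circ\Phi\circ s\colon U\to Y$ satisfies $h=q\circ\Phi\circ s=\varphi\circ p\circ s=\varphi$ on the dense open $U\cap X_{0}$, hence extends $\varphi|_{U}$ to a morphism on $U$. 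These extensions glue to a morphism $\varphi\colon X\to Y$; the same construction applied to $\Phi^{-1}$, using local sections of $q$, yields a morphism $\psi\colon Y\to X$ extending $\varphi^{-1}$. As $\psi\circ\varphi$ and $\varphi\circ\psi$ restrict to the identity on dense opens they are the identities, so $\varphi$ is an isomorphism; and $q\circ\Phi$ and $\varphi\circ p$, being morphisms $P\to Y$ agreeing on the dense open $p^{-1}(X_{0})$, coincide. This is the asserted commutative diagram.

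\emph{Main difficulty.} The genuinely new point with respect to Iitaka--Fujita's cancellation theorem for \emph{products} is exactly this passage from a structure that is only a product locally over the base to a global descent: the logarithmic pluricanonical formalism is global and produces the birational $\varphi$, after which Zariski local triviality --- the existence of local sections of $p$ and $q$ --- is precisely what promotes $\varphi$ to a morphism. The only technically delicate ingredient is the construction, in the first step, of a fibred smooth complete model $\overline{p}\colon\overline{P}\to\overline{X}$ along which the relative logarithmic canonical sheaf is understood; everything else is either Iitaka's invariance of logarithmic plurigenera or elementary separatedness arguments.
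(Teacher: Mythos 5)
Your argument is correct, but it takes a genuinely different route from the paper's. The paper never invokes pluricanonical maps: assuming (say) $Y$ of log-general type, it shows directly that $q\circ\Phi$ is constant on the fibres of $p$, by observing that for a prime divisor $D\subset X$ one has $\kappa(p^{-1}(D_{\mathrm{reg}}))\leq\dim D<\dim Y=\kappa(Y)$, so that $\Phi(p^{-1}(D))$ cannot dominate $Y$ and must coincide with some $q^{-1}(D')$; intersecting divisors down to a point then gives $\Phi(p^{-1}(x))=q^{-1}(y)$, after which local sections make $\varphi$ a morphism and Zariski's Main Theorem makes the resulting bijection an isomorphism. That argument is more elementary and, as the paper's subsequent remark exploits, it only uses that maps from varieties of smaller Kodaira dimension cannot dominate $Y$, so it extends verbatim to $Y$ merely non $\mathbb{A}_{*}^{1}$-uniruled --- a generalization your pluricanonical approach does not give, since you need $\Psi^{Y}_{m}$ to be birational. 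In exchange, your route yields more structure (the identification of the log-pluricanonical algebras of $P$ and $X$, hence $\kappa(P)=\kappa(X)$, and the fact that $p$ is the log-Iitaka fibration of $P$), and your regularization via local sections on both sides avoids the appeal to Zariski's Main Theorem. Two points you should make explicit to close the sketched step: (i) the triviality of the relative log canonical sheaf via $\bigwedge_{i}dt_{i}/t_{i}$ requires first reducing to a principal homogeneous $\mathbb{T}^{n}$-bundle structure (available because $X$ is irreducible, so $\check{H}^{1}(X,\mathrm{GL}_{n}(\mathbb{Z})_{X})=0$ as in \S 1.1); with nontrivial $\mathrm{GL}_{n}(\mathbb{Z})$-monodromy the form is only defined up to the sign $\det$, and the relative log canonical sheaf is only $2$-torsion; (ii) the fibred completion is most cleanly obtained by extending the classifying line bundles $L_{1},\ldots,L_{n}$ from $X$ to $\overline{X}$ and taking $\overline{P}=\mathbb{P}(\mathcal{O}\oplus\overline{L}_{1})\times_{\overline{X}}\cdots\times_{\overline{X}}\mathbb{P}(\mathcal{O}\oplus\overline{L}_{n})$ with boundary the zero and infinity sections together with $\overline{p}^{-1}(B_{X})$, for which $\omega_{\overline{P}}(\log B_{P})\simeq\overline{p}^{*}\omega_{\overline{X}}(\log B_{X})$ and the projection formula gives your isomorphism of pluricanonical spaces directly.
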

\begin{proof}
The proof is very similar to that of \cite[Theorem 1]{IiFu77}. We
may assume without loss of generality that $Y$ is of log-general
type. Since $p$ has local sections in the Zariski topology, it is
enough to show that $q\circ\Phi$ is constant on the fibers of $p$
to guarantee that the induced set-theoretic map $\varphi:X\rightarrow Y$
is a morphism. Furthermore, since $\Phi$ is an isomorphism, $\varphi$
will be bijective whence an isomorphism by virtue of Zariski Main
Theorem \cite[8.12.6]{EGAIV}. Since $p:P\rightarrow X$ is Zariski
locally trivial and $\kappa(\mathbf{T}^{n})=0$, it follows from \cite{Ii77}
that for every prime Weil divisor $D$ on $X$, the Kodaira dimension
$\kappa(p^{-1}(D_{\mathrm{reg}}))$ of the inverse image of the regular
part of $D$ is at most equal to $\dim D$. This implies in turn that
the restriction of $q\circ\Phi$ to $p^{-1}(D)$ cannot be dominant
since otherwise we would have $\kappa(Y)\leq\kappa(p^{-1}(D_{\mathrm{reg}}))<\dim X=\dim Y$,
in contradiction with the assumption that $\kappa(Y)=\dim Y$. So
there exists a prime Weil divisor $D'$ on $Y$ such that the image
of $p^{-1}(D)$ by $\Phi$ is contained in $q^{-1}(D')$, whence is
equal to it since they are both irreducible of the same dimension.
Now given any closed point $x\in X$, we can find a finite collection
of prime Weil divisors $D_{1},\ldots,D_{n}$ such that $D_{1}\cap\cdots D_{n}=\left\{ x\right\} $.
Letting $D_{i}'$ be a collection of prime Weil divisors on $Y$ such
that $\Phi(p^{-1}(D_{i}))=q^{-1}(D_{i}')$ for every $i=1,\ldots,n$,
we have 
\[
q^{-1}(\bigcap_{i=1,\ldots,n}D_{i}')=\bigcap_{i=1,\ldots,n}q^{-1}(D_{i}')=\bigcap_{i=1,\ldots,n}\Phi(p^{-1}(D_{i}))\simeq\Phi(\bigcap_{i=1,\ldots,n}p^{-1}(D_{i}))\simeq\Phi(\{x\}\times\mathbf{T}^{n})\simeq\mathbf{T}^{n}.
\]
So the intersection of the $D_{i}'$, $i=1,\ldots,n$, consists of
a unique closed point $y\in Y$ for which we have by construction
$\Phi(p^{-1}(x))=q^{-1}(y)$, as desired.\end{proof}
\begin{rem}
The proof above shows in fact that the conclusion of the Theorem holds
under the more geometric hypothesis that either $X$ or $Y$ is not
$\mathbb{A}_{*}^{1}$-uniruled, i.e., does not admit any dominant
generically finite morphism from a variety of the form $Z\times\mathbb{A}_{*}^{1}$.
In particular strong cancellation holds for products of algebraic
tori $\mathbf{T}^{n}$ with non $\mathbb{A}_{*}^{1}$-uniruled varieties. \end{rem}
\begin{cor}
Two smooth curves $C$ and $C'$ admit Zariski locally trivial $\mathbf{T}^{n}$-bundles
$p:P\rightarrow C$ and $p':P'\rightarrow C'$ with isomorphic total
spaces $P$ and $P'$ if and only if they are isomorphic. \end{cor}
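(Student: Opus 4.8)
The plan is to deduce everything from Proposition~\ref{prop:Bundle-Cancel} and its Remark, together with a short direct analysis of three special curves. The ``if'' direction is immediate: if $C\cong C'$ then the trivial bundles $C\times\mathbf{T}^{n}$ and $C'\times\mathbf{T}^{n}$ have isomorphic total spaces. For the converse, fix an isomorphism $\Phi\colon P\stackrel{\sim}{\to}P'$; I want to recover $C$, up to isomorphism, from the total space $P$ alone. First I would dispose of the case where one of $C,C'$ is not $\mathbb{A}_{*}^{1}$-uniruled: by the Remark following Proposition~\ref{prop:Bundle-Cancel}, $\Phi$ then descends to an isomorphism $C\stackrel{\sim}{\to}C'$. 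This handles every curve of log-general type, and also every elliptic curve $E$, since a nonconstant morphism $\mathbb{G}_{m}\to E$ would extend to a nonconstant morphism $\mathbb{P}^{1}\to E$, which does not exist.

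It then remains to treat the case where $C$ and $C'$ are both $\mathbb{A}_{*}^{1}$-uniruled. Here I would observe that a dominant generically finite morphism from $Z\times\mathbb{A}_{*}^{1}$ onto a curve forces $\dim Z=0$, so a smooth curve is $\mathbb{A}_{*}^{1}$-uniruled exactly when it receives a dominant morphism from $\mathbb{G}_{m}$; writing such a curve as $\mathbb{P}^{1}\setminus S$ and extending the morphism to a finite endomorphism $\overline{f}$ of $\mathbb{P}^{1}$ with $\overline{f}^{-1}(S)\subseteq\{0,\infty\}$, surjectivity of $\overline{f}$ gives $|S|\leq 2$. Thus $C,C'\in\{\mathbb{P}^{1},\mathbb{A}^{1},\mathbb{G}_{m}\}$, and I must show that $P\cong P'$ forces $C\cong C'$ among these three.

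For this I would use the description in~\ref{par:isoclass-exact-sequence}: since $C$ is irreducible, $p\colon P\to C$ may be taken to be a principal $\mathbb{T}^{n}$-bundle, so that $p_{*}\mathcal{O}_{P}=\bigoplus_{m\in\mathbb{Z}^{n}}\mathcal{L}^{(m)}$ with $\mathcal{L}^{(0)}=\mathcal{O}_{C}$ and $m\mapsto[\mathcal{L}^{(m)}]$ a homomorphism $\mathbb{Z}^{n}\to\mathrm{Pic}(C)$. When $C=\mathbb{A}^{1}$ or $\mathbb{G}_{m}$ we have $\mathrm{Pic}(C)=0$, so $P\cong C\times\mathbf{T}^{n}$: this total space is affine, and $\mathcal{O}(P)^{\times}/\mathbb{C}^{\times}\cong(\mathcal{O}(C)^{\times}/\mathbb{C}^{\times})\oplus\mathbb{Z}^{n}$ is free of rank $n$ when $C=\mathbb{A}^{1}$ and of rank $n+1$ when $C=\mathbb{G}_{m}$. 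When $C=\mathbb{P}^{1}$ I would argue that $P$ is never affine: either all $\mathcal{L}^{(m)}$ are trivial, and then $P\cong\mathbb{P}^{1}\times\mathbf{T}^{n}$ contains the complete curve $\mathbb{P}^{1}\times\{*\}$; or the image of $\mathbb{Z}^{n}\to\mathrm{Pic}(\mathbb{P}^{1})\cong\mathbb{Z}$ is a nonzero subgroup, hence contains an $m$ with $\deg\mathcal{L}^{(m)}\leq-2$, and then, $p$ being an affine morphism, $H^{1}(P,\mathcal{O}_{P})$ contains $H^{1}(\mathbb{P}^{1},\mathcal{L}^{(m)})\neq 0$. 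In either case Serre's criterion rules out affineness.

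Putting this together: ``$P$ is affine'' is an isomorphism invariant that separates $C=\mathbb{P}^{1}$ from $C\in\{\mathbb{A}^{1},\mathbb{G}_{m}\}$, and within the latter pair the rank of the free abelian group $\mathcal{O}(P)^{\times}/\mathbb{C}^{\times}$ separates $\mathbb{A}^{1}$ from $\mathbb{G}_{m}$; hence $P\cong P'$ forces $C\cong C'$, which finishes the proof. The only step that is not bookkeeping is the non-affineness of an arbitrary $\mathbf{T}^{n}$-bundle over $\mathbb{P}^{1}$, which I expect to be the main (small) obstacle; once the Remark to Proposition~\ref{prop:Bundle-Cancel} has reduced matters to the three $\mathbb{A}_{*}^{1}$-uniruled curves of non-positive Kodaira dimension, the rest is straightforward.
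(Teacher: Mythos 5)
Your proof is correct, and its skeleton is the same as the paper's: reduce via Proposition \ref{prop:Bundle-Cancel} to the handful of curves of non-log-general type, then separate those by invariants of the total space, finishing the $\mathbb{A}^{1}$-versus-$\mathbb{G}_{m}$ case by the triviality of the Picard group and a count of units on $P$. The two places where you genuinely diverge are worth noting. First, the paper splits the residual cases by whether $C$ is affine or projective and handles positive-genus projective curves by observing that the rational fibers of $p$ must land in fibers of $p'$; you instead invoke the Remark following Proposition \ref{prop:Bundle-Cancel} (non-$\mathbb{A}_{*}^{1}$-uniruledness), which disposes of elliptic curves in one stroke and reduces cleanly to the three curves $\mathbb{P}^{1}$, $\mathbb{A}^{1}$, $\mathbb{G}_{m}$ dominated by $\mathbb{G}_{m}$ --- arguably a tidier use of the machinery the paper has already set up. Second, for the affine/projective dichotomy the paper argues that $P$ affine forces $C\simeq\mathrm{Spec}(\Gamma(P,\mathcal{O}_{P})^{\mathbb{T}})$ affine via the categorical quotient, whereas you prove directly that no $\mathbf{T}^{n}$-bundle over $\mathbb{P}^{1}$ is affine, using the weight decomposition $p_{*}\mathcal{O}_{P}=\bigoplus_{m}\mathcal{L}^{(m)}$ and either a complete curve in the trivial case or a nonvanishing $H^{1}(\mathbb{P}^{1},\mathcal{L}^{(m)})$ with $\deg\mathcal{L}^{(m)}\leq-2$ otherwise; both arguments are sound, yours being more computational and the paper's more structural. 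All the individual steps you flag (the extension of $\mathbb{G}_{m}\to C$ to a finite endomorphism of $\mathbb{P}^{1}$ bounding $|S|\leq2$, the identification $H^{1}(P,\mathcal{O}_{P})\simeq\bigoplus_{m}H^{1}(\mathbb{P}^{1},\mathcal{L}^{(m)})$ for the affine morphism $p$, and the ranks $n$ and $n+1$ of $\mathcal{O}(P)^{\times}/\mathbb{C}^{\times}$) check out, so I see no gap.
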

\begin{proof}
If either $C$ or $C'$ is of log-general type, then the assertion
follows from Proposition \ref{prop:Bundle-Cancel}. Note further that
$C$ is affine if and only if so is $P$. Indeed, $p:P\rightarrow C$
is an affine morphism and conversely, if $P$ is affine, then viewing
$P$ as a principal homogeneous $\mathbb{T}^{n}$ -bundle with geometric
quotient $P/\!/\mathbb{T}^{n}\simeq C$, the affineness of $C$ follows
from the fact that  the algebraic quotient morphism $P\rightarrow P/\!/\mathbb{T}=\mathrm{Spec}(\Gamma(P,\mathcal{O}_{P})^{\mathbb{T}})$
is a categorical quotient in the category of algebraic varieties,
so that $C\simeq\mathrm{Spec}(\Gamma(P,\mathcal{O}_{P})^{\mathbb{T}})$.
Thus $C$ and $C'$ are simultaneously affine or projective. In the
first case, $C$ and $C'$ are isomorphic to either the affine line
$\mathbb{A}^{1}$ or the punctured affine line $\mathbb{A}_{*}^{1}$
which both have a trivial Picard group. So $P$ and $P'$ are trivial
$\mathbf{T}^{n}$-bundles and the isomorphy of $C$ and $C'$ follows
by comparing invertible function on $P$ and $P'$. In the second
case, if either $C$ or $C'$ has non negative genus, say $g(C')\geq0$,
then, being rational, the image of a fiber of $p:P\rightarrow C$
by an isomorphism $\Phi:P\stackrel{\sim}{\rightarrow}P'$ must be
contained in a fiber of $p':P'\rightarrow C'$. We conclude similarly
as in the proof if the previous Proposition that $\Phi$ descends
to an isomorphism between $C$ and $C'$. 
\end{proof}
\begin{parn} The automorphism group $\mathrm{Aut}(X)$ of a scheme
$X$ acts on the set of isomorphy classes of principal homogeneous
$\mathbb{T}^{n}$-bundles over $X$ via the linear representation
\[
\eta:{\rm Aut}\left(X\right)\rightarrow{\rm GL}(H^{1}(X,\mathbb{T}^{n})),\;\psi\mapsto\eta\left(\psi\right)=\psi^{*}:H^{1}(X,\mathbb{T}^{n})\stackrel{\sim}{\rightarrow}H^{1}(X,\mathbb{T}^{n}),
\]
 where $\psi^{*}$ maps the isomorphy class of principal homogeneous
$\mathbb{T}^{n}$-bundle $p:P\rightarrow X$ to the one of the $\mathbb{T}^{n}$-bundle
${\rm pr}_{2}:P\times_{p,X,\psi}X\rightarrow X$. This action commutes
with natural action of $\mathrm{GL}_{n}(\mathbb{Z})$ introduced in
$\S$ \ref{par:isoclass-exact-sequence} above, and Proposition \ref{prop:Bundle-Cancel}
implies the following characterization:

\end{parn}
\begin{cor}
\label{cor:IsoClass-logGeneral} Over a smooth variety $X$ of log-general
type, the set $H^{1}(X,\mathbb{T}^{n})/(\mathrm{Aut}(X)\times\mathrm{GL}_{n}(\mathbb{Z}))$
parametrizes isomorphy classes as abstract varieties of total spaces
of Zariski locally trivial $\boldsymbol{T}^{n}$-bundles $p:P\rightarrow X$.
\end{cor}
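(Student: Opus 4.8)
The plan is to combine the dictionary of \S\ref{par:isoclass-exact-sequence}, which already identifies isomorphy classes of Zariski locally trivial $\mathbf{T}^n$-bundles over the irreducible variety $X$ with the orbit set $H^{1}(X,\mathbb{T}^n)/\mathrm{GL}_n(\mathbb{Z})$, with the strong cancellation statement of Proposition \ref{prop:Bundle-Cancel}, which forces every abstract isomorphism between total spaces to be compatible with the bundle projections. Concretely, after choosing for every Zariski locally trivial $\mathbf{T}^n$-bundle $p\colon P\to X$ a structure of principal homogeneous $\mathbb{T}^n$-bundle --- possible since $X$ is irreducible, being smooth of log-general type --- one obtains a surjection from $H^{1}(X,\mathbb{T}^n)$ onto the set $\mathcal{S}$ of isomorphy classes as abstract varieties of such total spaces, sending a class to the isomorphy class of the total space of the corresponding bundle. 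I would then show that its fibers are precisely the orbits of the natural action of $\mathrm{Aut}(X)\times\mathrm{GL}_n(\mathbb{Z})$ on $H^{1}(X,\mathbb{T}^n)$, the two factors commuting as recalled just before the statement.

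First I would check that this map is constant on $\mathrm{Aut}(X)\times\mathrm{GL}_n(\mathbb{Z})$-orbits: regrading the $\mathbb{T}^n$-action by an element of $\mathrm{GL}_n(\mathbb{Z})$ does not change the underlying $\mathbf{T}^n$-bundle, hence not the total space; while for $\psi\in\mathrm{Aut}(X)$ the first projection $P\times_{p,X,\psi}X\to P$ is the base change of the isomorphism $\psi$, hence an isomorphism of abstract varieties, so that $\eta(\psi)$ applied to the class of $p\colon P\to X$ has the same image in $\mathcal{S}$ as that of $P$ itself. This yields a well-defined surjection $H^{1}(X,\mathbb{T}^n)/(\mathrm{Aut}(X)\times\mathrm{GL}_n(\mathbb{Z}))\to\mathcal{S}$, and it remains only to prove injectivity.

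For injectivity, let $p\colon P\to X$ and $q\colon Q\to X$ be two such bundles and let $\Phi\colon P\stackrel{\sim}{\rightarrow}Q$ be an isomorphism of abstract varieties. Since $X$ is of log-general type, Proposition \ref{prop:Bundle-Cancel} provides $\varphi\in\mathrm{Aut}(X)$ with $q\circ\Phi=\varphi\circ p$. The universal property of the fiber product then lifts $\Phi$ to an $X$-morphism $\widetilde{\Phi}\colon P\to Q\times_{q,X,\varphi}X$; since $\Phi$ is an isomorphism and the second projection $Q\times_{q,X,\varphi}X\to Q$ is the base change of the isomorphism $\varphi$, the map $\widetilde{\Phi}$ is itself an isomorphism, and it is one of $\mathbf{T}^n$-bundles over $X$ as it intertwines the structure morphisms. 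Reading this off in cohomology, the class of $P$ equals $\eta(\varphi)$ applied to the class of $Q$ in $H^{1}(X,\mathbb{T}^n)/\mathrm{GL}_n(\mathbb{Z})$, so that, lifting back to $H^{1}(X,\mathbb{T}^n)$, the classes of $P$ and $Q$ lie in a common $\mathrm{Aut}(X)\times\mathrm{GL}_n(\mathbb{Z})$-orbit. This is the required injectivity, and combining the two directions gives the asserted bijection.

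The argument is essentially formal once Proposition \ref{prop:Bundle-Cancel} is available; the only step calling for a little care is the bookkeeping between the two quotients, namely that the linear representation $\eta$ of $\mathrm{Aut}(X)$ descends to $H^{1}(X,\mathbb{T}^n)/\mathrm{GL}_n(\mathbb{Z})$ and that, conversely, neither moving a class within its $\mathrm{GL}_n(\mathbb{Z})$-orbit nor pulling it back along an automorphism of $X$ alters the total space up to abstract isomorphism. I do not expect a genuine obstacle there: the entire weight of the corollary sits in the cancellation statement of Proposition \ref{prop:Bundle-Cancel}.
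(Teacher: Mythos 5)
Your proposal is correct and follows exactly the route the paper intends: the corollary is stated as an immediate consequence of the dictionary in \S\ref{par:isoclass-exact-sequence} together with Proposition \ref{prop:Bundle-Cancel}, and your two directions (invariance of the total space under the $\mathrm{Aut}(X)\times\mathrm{GL}_n(\mathbb{Z})$-action, and descent of an abstract isomorphism to an $X$-isomorphism $P\simeq\varphi^{*}Q$ of $\mathbf{T}^n$-bundles) are precisely the formal verification the paper leaves implicit.
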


\section{non-Cancellation for the $1$-dimensional torus \label{sec:non-Cancel}}

Candidates for non-cancellation of the $1$-dimensional torus $\mathbf{T}=\mathbb{A}_{*}^{1}=\mathrm{Spec}(\mathbb{C}[t^{\pm1}])$
can be constructed along the following lines: given say a smooth quasi-projective
variety $X$ and a pair of non-isomorphic principal homogeneous $\mathbb{G}_{m}$-bundles
$p:P\rightarrow X$ and $q:Q\rightarrow X$ whose classes generate
the same subgroup of $H^{1}(X,\mathbb{G}_{m})$, the fiber product
$W=P\times_{X}Q$ is a principal homogeneous $\mathbb{T}^{2}$-bundle
over $X$, which inherits the structure of a principal $\mathbb{G}_{m}$-bundle
over $P$ and $Q$ simultaneously, via the first and the second projection
respectively. Since the classes of $P$ and $Q$ generate the same
subgroup of $H^{1}(X,\mathbb{G}_{m})$, it follows that the classes
of $\mathrm{pr}_{1}:W\simeq p^{*}Q\rightarrow P$ and $\mathrm{pr}_{2}:W\simeq q^{*}P\rightarrow Q$
in $H^{1}(P,\mathbb{G}_{m})$ and $H^{1}(Q,\mathbb{G}_{m})$ respectively
are both trivial and so, we obtain isomorphisms $P\times\mathbb{G}_{m}\simeq W\simeq Q\times\mathbb{G}_{m}$
of locally trivial $\mathbf{T}^{2}$-bundles over $X$. 

Then we are left with finding appropriate choices of $X$ and classes
in $H^{1}(X,\mathbb{G}_{m})$ which guarantee that the total spaces
of the corresponding principal homogeneous $\mathbb{G}_{m}$-bundles
$p:P\rightarrow X$ and $q:Q\rightarrow X$ are not isomorphic as
abstract algebraic varieties.

\subsection{\label{sub:Torus-NonCancel-dim3}Non-cancellation for smooth factorial
affine varieties of dimension $\geq3$ }

A direct application of the above strategy leads to families of smooth
factorial affine varieties of any dimension $\geq3$ for which cancellation
fails: 
\begin{prop}
\label{thm:Non-Cancel-Gm} Let $X$ be the complement of a smooth
hypersurface $D$ of degree $d$ in $\mathbb{P}^{r}$, $r\geq2$,
such that $d\geq r+2$ and $|\left(\mathbb{Z}/d\mathbb{Z}\right)^{*}|\geq3$,
and let $p:P\rightarrow X$ and $q:Q\rightarrow X$ be the $\mathbb{G}_{m}$-bundles
corresponding to the line bundles $\mathcal{O}_{\mathbb{P}^{n}}(1)\mid_{X}$
and $\mathcal{O}_{\mathbb{P}^{n}}(k)\mid_{X}$, $k\in\left(\mathbb{Z}/d\mathbb{Z}\right)^{*}\setminus\{\overline{1},\overline{d-1}\}$
under the isomorphism $H^{1}(X,\mathbb{G}_{m})\simeq\mathrm{Pic}(X)$.
Then $P$ and $Q$ are not isomorphic as abstract algebraic varieties
but $P\times\mathbb{A}_{*}^{1}$ and $Q\times\mathbb{A}_{*}^{1}$
are isomorphic as schemes over $X$. \end{prop}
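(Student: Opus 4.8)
The plan is to treat the two assertions separately, the delicate point being the action of $\mathrm{Aut}(X)$ on $\mathrm{Pic}(X)$. I begin with two preliminary facts. Restriction from $\mathbb{P}^{r}$ gives $\mathrm{Pic}(X)\simeq\mathrm{Pic}(\mathbb{P}^{r})/\mathbb{Z}\cdot[D]\simeq\mathbb{Z}/d\mathbb{Z}$, generated by the class $h$ of $\mathcal{O}_{\mathbb{P}^{r}}(1)|_{X}$; hence under $\mathrm{Pic}(X)\simeq H^{1}(X,\mathbb{G}_{m})$ the bundles $p:P\to X$ and $q:Q\to X$ have classes $h$ and $kh$, and since $k$ is invertible modulo $d$ these two classes generate the same subgroup of $\mathrm{Pic}(X)$, namely all of it. Moreover $\omega_{\mathbb{P}^{r}}(\log D)=\omega_{\mathbb{P}^{r}}\otimes\mathcal{O}_{\mathbb{P}^{r}}(D)\simeq\mathcal{O}_{\mathbb{P}^{r}}(d-r-1)$ is ample since $d-r-1\geq1$, hence big, so $\kappa(X)=r=\dim X$ and $X$ is of log-general type.

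For the second assertion, the first observation places us exactly in the setting of the fiber product construction recalled at the beginning of Section~\ref{sec:non-Cancel}: since the classes of $P$ and $Q$ generate the same subgroup of $H^{1}(X,\mathbb{G}_{m})$, the $\mathbb{T}^{2}$-bundle $W=P\times_{X}Q$ has trivial class as a $\mathbb{G}_{m}$-bundle over $P$ via $\mathrm{pr}_{1}$ (its class being $p^{*}(kh)$) and trivial class as a $\mathbb{G}_{m}$-bundle over $Q$ via $\mathrm{pr}_{2}$, whence isomorphisms $P\times\mathbb{A}_{*}^{1}\simeq W\simeq Q\times\mathbb{A}_{*}^{1}$; these trivializations live over $P$ and over $Q$ respectively, so the composite isomorphism is one of schemes over $X$.

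For the first assertion, I would invoke Corollary~\ref{cor:IsoClass-logGeneral}, applicable since $X$ is of log-general type: isomorphy classes as abstract varieties of total spaces of Zariski locally trivial $\mathbf{T}^{1}$-bundles over $X$ are the elements of $\mathrm{Pic}(X)/(\mathrm{Aut}(X)\times\mathrm{GL}_{1}(\mathbb{Z}))$, where $\mathrm{GL}_{1}(\mathbb{Z})=\{\pm1\}$ acts by $a\mapsto-a$. So $P\simeq Q$ as varieties if and only if $h$ and $kh$ lie in the same $\mathrm{Aut}(X)\times\{\pm1\}$-orbit in $\mathbb{Z}/d\mathbb{Z}$, and everything reduces to showing $\mathrm{Aut}(X)$ acts trivially on $\mathrm{Pic}(X)$. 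I would prove this by showing that every $\psi\in\mathrm{Aut}(X)$ extends to an automorphism of $\mathbb{P}^{r}$: since $K_{\mathbb{P}^{r}}+D=(d-r-1)H$ is ample, $(\mathbb{P}^{r},D)$ is the log-canonical model of $X$; more precisely the log-canonical ring $R(X)=\bigoplus_{m\geq0}H^{0}(\overline{X},m(K_{\overline{X}}+B))$ is independent of the smooth completion $(\overline{X},B)$ of $X$ by \cite{Ii77}, and here equals the Veronese subalgebra $\bigoplus_{m\geq0}\mathbb{C}[x_{0},\ldots,x_{r}]_{m(d-r-1)}$, so that $\mathrm{Proj}\,R(X)\simeq\mathbb{P}^{r}$ and the log-canonical map of $X$ is the given open immersion $X\hookrightarrow\mathbb{P}^{r}$. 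Consequently $\psi$ induces a graded automorphism of $R(X)$, hence an automorphism $\tilde{\psi}$ of $\mathbb{P}^{r}$ with $\tilde{\psi}|_{X}=\psi$, which must therefore carry $D=\mathbb{P}^{r}\setminus X$ onto itself; being an element of $\mathrm{PGL}_{r+1}(\mathbb{C})$ it fixes $\mathcal{O}_{\mathbb{P}^{r}}(1)$, so $\psi^{*}$ fixes $h$, and as $h$ generates $\mathrm{Pic}(X)$ this forces $\psi^{*}=\mathrm{id}$.

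Granting this, $P\simeq Q$ holds if and only if $kh=\pm h$ in $\mathbb{Z}/d\mathbb{Z}$, i.e. $k\in\{\overline{1},\overline{d-1}\}$, contrary to hypothesis; and the constraints $d\geq r+2\geq4$ and $|(\mathbb{Z}/d\mathbb{Z})^{*}|\geq3$ guarantee respectively that $\overline{1}\neq\overline{d-1}$ and that an admissible $k$ exists, so such $X$, $P$, $Q$ indeed occur. I expect the extension-of-automorphisms step to be the main obstacle: it requires justifying that the log-canonical ring of $X$ is intrinsic and that its $\mathrm{Proj}$, together with the open immersion of $X$, reconstructs the pair $(\mathbb{P}^{r},D)$ — once that is in hand, the remainder is bookkeeping inside $\mathrm{Pic}(X)=\mathbb{Z}/d\mathbb{Z}$.
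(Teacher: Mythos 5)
Your proposal is correct and follows the same overall skeleton as the paper's proof: compute $\mathrm{Pic}(X)\simeq\mathbb{Z}/d\mathbb{Z}$, get the cylinder isomorphism from the fiber product $P\times_{X}Q$ using that $h$ and $kh$ generate the same subgroup, verify $X$ is of log-general type, and then reduce the non-isomorphy of $P$ and $Q$ via Proposition \ref{prop:Bundle-Cancel} and Corollary \ref{cor:IsoClass-logGeneral} to the triviality of the $\mathrm{Aut}(X)$-action on $\mathrm{Pic}(X)$, which in turn follows from the fact that every automorphism of $X$ extends to a linear automorphism of $\mathbb{P}^{r}$ preserving $D$. Where you genuinely diverge is in the proof of this last extension step. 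The paper argues birationally: an automorphism of $X$ is a birational self-map of $\mathbb{P}^{r}$ restricting to an isomorphism off $D$, and if it failed to be biregular then $D$ would be an exceptional divisor of the inverse, hence birationally ruled, contradicting the ampleness of $K_{D}=(d-r-1)H|_{D}$. You instead identify $(\mathbb{P}^{r},D)$ as the log-canonical model of $X$: the log-canonical ring $R(X)$ is intrinsic by Iitaka, equals the $(d-r-1)$-Veronese of $\mathbb{C}[x_{0},\ldots,x_{r}]$, and $\mathrm{Proj}\,R(X)\simeq\mathbb{P}^{r}$ with the log-canonical map recovering the given open immersion, so every $\psi\in\mathrm{Aut}(X)$ induces a graded automorphism of $R(X)$ and hence extends. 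Both arguments are sound; the paper's is more elementary and self-contained (it only uses that exceptional divisors are ruled plus adjunction), while yours is more conceptual and generalizes immediately to any smooth affine variety whose log-canonical model is a pair $(\overline{X},B)$ with $K_{\overline{X}}+B$ ample — at the cost of invoking the functoriality of the log-canonical ring under automorphisms of the open part, which you correctly flag as the point needing care but which is indeed standard from \cite{Ii77}. The remaining bookkeeping (exclusion of $k\in\{\overline{1},\overline{d-1}\}$, existence of admissible $k$ from $|(\mathbb{Z}/d\mathbb{Z})^{*}|\geq3$) matches the paper.
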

\begin{proof}
The Picard group of $X$ is isomorphic to the group $\mu_{d}\simeq\mathbb{Z}/d\mathbb{Z}$
of $d$-th roots of unity, generated by the restriction of $\mathcal{O}_{\mathbb{P}^{n}}(1)$
to $X$. Since $k$ is relatively prime with $d$, $\mathcal{O}_{\mathbb{P}^{r}}(k)\mid_{X}$
is also a generator of $\mathrm{Pic}(X)$. This guarantees that $P\times\mathbb{A}_{*}^{1}$
is isomorphic to $Q\times\mathbb{A}_{*}^{1}$ by virtue of the previous
discussion. Since $d\geq r+2$, $K_{\mathbb{P}^{r}}+D$ is linearly
equivalent to a positive multiple of a hyperplane section, and so
$X$ is of log-general type. We can therefore apply Proposition \ref{prop:Bundle-Cancel}
to deduce that for every isomorphism of abstract algebraic varieties
$\Phi:P\stackrel{\sim}{\rightarrow}Q$, there exists an automorphism
$\varphi$ of $X$ such that $P$ is isomorphic to $\varphi^{*}Q$
as a Zariski locally trivial $\mathbb{A}_{*}^{1}$-bundle over $X$.
In view of Corollary \ref{cor:IsoClass-logGeneral}, this means equivalently
that as a $\mathbb{G}_{m}$-bundle over $X$, $\varphi^{*}Q$ is isomorphic
to either $P$ or its inverse $P^{-1}$ in $H^{1}(X,\mathbb{G}_{m})$.
Since the choice of $k$ guarantees that the $\mathbb{G}_{m}$-bundle
$Q$ is isomorphic neither to $P$ nor to $P^{-1}$, the conclusion
follows from the observation that the natural action of $\mathrm{Aut}(X)$
on $H^{1}(X,\mathbb{G}_{m})$ is the trivial one, due to the fact
that every automorphism of $X$ is the restriction of a linear automorphism
of the ambient space $\mathbb{P}^{r}$. Indeed, through the open inclusion
$X\hookrightarrow\mathbb{P}^{r}$, we may consider an automorphism
$\varphi$ of $X$ as a birational self-map of $\mathbb{P}^{r}$ restricting
to an isomorphism outside $D$. If $\varphi$ is not biregular on
the whole $\mathbb{P}^{r}$, then $D$ would be an exceptional divisor
of $\varphi^{-1}$, in particular, $D$ would be birationally ruled,
in contradiction with the ampleness of its canonical divisor $K_{D}$
guaranteed by the condition $d\geq r+2$ .\end{proof}
\begin{example}
In the setting of Proposition \ref{thm:Non-Cancel-Gm} above, an isomorphism
$P\times\mathbb{A}_{*}^{1}\stackrel{\sim}{\rightarrow}Q\times\mathbb{A}_{*}^{1}$
can be constructed ``explicitly'' as follows. The complement $X\subset\mathbb{P}^{r}=\mathrm{Proj}(\mathbb{C}[x_{0},\ldots,x_{r}])$
of a smooth hypersurface $D$ defined by an equation $F(x_{0},\ldots,x_{r})=0$
for some homogeneous polynomial of degree $d$ can be identified with
the quotient of the smooth factorial affine variety $\tilde{X}\subset\mathbb{A}^{r+1}$
with equation $F(x_{0},\ldots,x_{r})=1$ by the free action of the
group $\mu_{d}=\mathrm{Spec}(\mathbb{C}[\varepsilon]/(\varepsilon^{d}-1))$
of $d$-th roots of unity defined by $\varepsilon\cdot(x_{0},\ldots,x_{r})=(\varepsilon x_{0},\ldots,\varepsilon x_{r})$.
The $\mathbb{G}_{m}$-bundles over $X$ corresponding to the line
bundles $\mathcal{O}_{\mathbb{P}^{r}}(k)\mid_{X}$, $k\in\mathbb{Z}$,
then coincide with the quotients of the trivial $\mathbb{A}_{*}^{1}$-bundles
$\tilde{X}\times\mathbb{A}_{*}^{1}=\tilde{X}\times\mathrm{Spec}(\mathbb{C}[t^{\pm1}])$
by the respective $\mu_{d}$-actions $\varepsilon\cdot(x_{0},\ldots,x_{n},t)=(\varepsilon x_{0},\ldots,\varepsilon x_{n},\varepsilon^{k}t)$,
$k\in\mathbb{Z}$. Now let $q:Q\rightarrow X$ be the $\mathbb{G}_{m}$-bundle
corresponding to $\mathcal{O}_{\mathbb{P}^{r}}(k)\mid_{X}$ for some
$k\in\{2,\ldots,d-2\}$ relatively prime with $d$, and let $a,b\in\mathbb{Z}$
be such that $ak-bd=1$. Then one checks that the following isomorphism
\begin{eqnarray*}
\tilde{\Phi}:\tilde{X}\times\mathbf{T}^{2}=\tilde{X}\times\mathrm{Spec}(\mathbb{C}[t_{1}^{\pm1},u_{1}^{\pm1}])\stackrel{\sim}{\rightarrow}\tilde{X}\times\mathbf{T}^{2}=\tilde{X}\times\mathrm{Spec}(\mathbb{C}[t_{2}^{\pm1},u_{2}^{\pm1}]), & (t_{1},u_{1})\mapsto(t_{2},u_{2})=(t_{1}^{k}u_{1},t_{1}^{bd}u_{2}^{a})
\end{eqnarray*}
of schemes over $\tilde{X}$ is equivariant for the actions, say $\mu_{d,1}$
and $\mu_{d,k}$, of $\mu_{d}$ defined respectively by $\varepsilon\cdot(x_{0},\ldots,x_{r},t_{1},u_{1})=(\varepsilon x_{0},\ldots,\varepsilon x_{r},\varepsilon t_{1},u_{1})$
and $\varepsilon\cdot(x_{0},\ldots,x_{r},t_{2},u_{2})=(\varepsilon x_{0},\ldots,\varepsilon x_{r},\varepsilon^{k}t_{2},u_{2})$.
It follows that $\tilde{\Phi}$ descends to an isomorphism 
\[
\Phi:(\tilde{X}\times\mathbf{T}^{2})/\mu_{d,1}\simeq P\times\mathbb{A}_{*}^{1}\stackrel{\sim}{\rightarrow}Q\times\mathbb{A}_{*}^{1}\simeq(\tilde{X}\times\mathbf{T}^{2})/\mu_{d,k}
\]
of schemes over $X\simeq\tilde{X}/\mu_{d}$. 
\end{example}

\subsection{\label{sub:Smooth-surfaces} Non-cancellation for smooth factorial
affine surfaces }

Since the Picard group of a smooth affine curve $C$ of log-general
type is either trivial if $C$ is rational or of positive dimension
otherwise, there no direct way to adapt the previous construction
using principal $\mathbb{G}_{m}$-bundles over algebraic curves to
produce $2$-dimensional candidate counter-examples for cancellation
by $\mathbb{A}_{*}^{1}$. Instead, we will use locally trivial $\mathbb{A}_{*}^{1}$-bundles
over certain orbifold curves $\tilde{C}$ which arise from suitably
chosen $\mathbb{A}_{*}^{1}$-fibrations $\pi:S\rightarrow C$ on smooth
affine surfaces $S$. 

\begin{parn} Let $S$ be a smooth affine surface $S$ equipped with
a flat fibration $\pi:S\rightarrow C$ over a smooth affine rational
curve $C$ whose fibers, closed or not, are all isomorphic to $\mathbb{A}_{*}^{1}$
over the corresponding residue fields when equipped with their reduced
structure%
\footnote{In particular, $\pi$ is an untwisted $\mathbb{A}_{*}^{1}$-fibration
in the sense of \cite{MiBook}.%
}. It follows from the description of degenerate fibers of $\mathbb{A}_{*}^{1}$-fibrations
given in \cite[Theorem 1.7.3]{MiBook} that $S$ admits a relative
completion into a $\mathbb{P}^{1}$-fibered surface $\overline{\pi}:\overline{S}\rightarrow C$
obtained from a trivial $\mathbb{P}^{1}$-bundle $\mathrm{pr}_{1}:C\times\mathbb{P}^{1}\rightarrow C$
with a fixed pair of disjoint sections $H_{0}$ and $H_{\infty}$
by performing finitely many sequences of blow-ups of the following
type: the first step consists of the blow-up of a closed point $c_{i}\in H_{0}$,
$i=1,\ldots,s$, with exceptional divisor $E_{1,i}$ followed by the
blow-up of the intersection point of $E_{1,i}$ with the proper transform
of the fiber $F_{i}=\mathrm{pr}_{1}^{-1}(\mathrm{pr}_{1}(c_{i}))$,
the next steps consist of the blow-up of an intersection point of
the last exceptional divisor produced with the proper transform of
the union of $F_{i}$ and the previous ones, in such a way that the
total transform of $F_{i}$ is a chain of proper rational curves with
the last exceptional divisors produced, say $E_{i,n_{i}}$, as the
unique irreducible component with self-intersection $-1$. The projection
$\mathrm{pr}_{1}:C\times\mathbb{P}^{1}\rightarrow C$ lifts on the
so-constructed surface $\overline{S}$ to a $\mathbb{P}^{1}$-fibration
$\overline{\pi}:\overline{S}\rightarrow C$ and $S$ isomorphic to
the complement of the union of the proper transforms of $H_{0}$ and
$H_{\infty}$ and of the divisors $F_{i}\cup E_{i,1}\cup\cdots\cup E_{i,n_{i}-1}$,
$i=1,\ldots,s$. The restriction of $\overline{\pi}$ to $S$ is indeed
an $\mathbb{A}_{*}^{1}$-fibration $\pi:S\rightarrow C$ with $s$
degenerate fibers $\pi^{-1}(\mathrm{pr}_{1}(c_{i}))$ isomorphic to
$E_{i,n_{i}}\cap S\simeq\mathbb{A}_{*}^{1}$ when equipped with their
reduced structure and whose respective multiplicities depend on the
sequences of blow-ups performed. 

\end{parn}

\begin{parn} \label{par:A1star-StackFactorFibers} It follows in
particular from this construction that $S$ admits a proper action
of the multiplicative group $\mathbb{G}_{m}$ which lifts the one
on $(C\times\mathbb{P}^{1})\setminus(H_{0}\cup H_{\infty})\simeq C\times\mathbb{A}_{*}^{1}$
by translations on the second factor. The local descriptions given
in \cite{FieK91} can then be re-interpreted for our purpose as the
fact that the $\mathbb{A}_{*}^{1}$-fibration $\pi:S\rightarrow C$
factors through an \'etale locally trivial $\mathbb{A}_{*}^{1}$-bundle
$\tilde{\pi}:S\rightarrow\tilde{C}$ over an orbifold curve $\delta:\tilde{C}\rightarrow C$
obtained from $C$ by replacing the finitely many points $c_{1},\ldots,c_{s}$
over the which the fiber $\pi^{-1}(c_{i})$ is multiple, say of multiplicity
$m_{i}>1$, by suitable orbifold points $\tilde{c}_{i}$ depending
only on the multiplicity $m_{i}$. More precisely, $\tilde{C}$ is
a smooth separated Deligne-Mumford stack of dimension $1$, of finite
type over $\mathbb{C}$ and with trivial generic stabilizer, which,
Zariski locally around $\delta^{-1}(c_{i})$ looks like the quotient
stack $[\tilde{U}_{c_{i}}/\mathbb{Z}_{m_{i}}]$, where $\tilde{U}_{c_{i}}\rightarrow U_{c_{i}}$
is a Galois cover of order $m_{i}$ of a Zariski open neighborhood
$U_{c_{i}}$ of $c_{i}$, totally ramified over $c_{i}$ and \'etale
elsewhere \cite{BeNo06}. 

\end{parn}
\begin{example}
\label{Ex:DM-quotient} Let $\mathbb{G}_{m}$ act on $\mathbb{A}_{*}^{2}=\mathrm{Spec}(\mathbb{C}[x,y])\setminus\{(0,0)\}$
by $t\cdot(x,y)=(t^{2}x,t^{5}y)$. The quotient $\mathbb{P}(2,5)=\mathbb{A}_{*}^{2}/\mathbb{G}_{m}$
is isomorphic to $\mathbb{P}^{1}$ and the quotient morphism $q:\mathbb{A}_{*}^{2}\rightarrow\mathbb{P}^{1}=\mathbb{A}_{*}^{2}/\mathbb{G}_{m}$
is an $\mathbb{A}_{*}^{1}$-fibration with two degenerate fibers $q^{-1}([0:1])$
and $q^{-1}([1:0])$ of multiplicities $5$ and $2$ respectively,
corresponding to the orbits of the points $(0,1)$ and $(1,0)$. In
contrast, the quotient stack $[\mathbb{A}_{*}^{2}/\mathbb{G}_{m}]$
is the Deligne-Mumford curve $\mathbb{P}[2,5]$ obtained from $\mathbb{P}^{1}$
by replacing the points $[0:1]$ and $[1:0]$ by ``stacky points''
with respective Zariski open neighborhoods isomorphic to the quotients
$[\mathbb{A}^{1}/\mathbb{Z}_{5}]$ and $[\mathbb{A}^{1}/\mathbb{Z}_{2}]$
for the actions of $\mathbb{Z}_{5}$ and $\mathbb{Z}_{2}$ on $\mathbb{A}^{1}=\mathrm{Spec}(\mathbb{C}[z])$
given by $z\mapsto\exp(2i\pi/5)z$ and $z\mapsto-z$. The quotient
morphism $q:\mathbb{A}_{*}^{2}\rightarrow\mathbb{P}^{1}$ factors
through the canonical morphism $\tilde{q}:\mathbb{A}_{*}^{2}\rightarrow\mathbb{P}[2,5]$
which is an \'etale local trivial $\mathbb{A}_{*}^{1}$-bundle, and
the induced morphism $\delta:\mathbb{P}[2,5]\rightarrow\mathbb{P}^{1}=\mathbb{A}_{*}^{2}/\mathbb{G}_{m}$
is an isomorphism over the complement of the points $[0:1]$ and $[1:0]$. 
\end{example}
In the next paragraphs, we construct two smooth affine surfaces $S_{1}$
and $S_{2}$ with an $\mathbb{A}_{*}^{1}$-fibration $\pi_{i}:S_{i}\rightarrow\mathbb{A}^{1}$,
$i=1,2$, factoring through a locally trivial $\mathbb{A}_{*}^{1}$-bundle
$\tilde{\pi}:S_{i}\rightarrow\mathbb{A}^{1}[2,5]$ over the affine
Deligne-Mumford curve $\mathbb{A}^{1}[2,5]$ obtained from the one
$\mathbb{P}[2,5]$ of Example \ref{Ex:DM-quotient} above by removing
a general scheme-like point.

\newpage

\begin{figure}[!htb]
\psset{linewidth=0.5pt}
\begin{pspicture}(-1.5,4.5)(9,-3.5)

\rput(-1,3){
\psscalebox{0.7}{
\psline[linestyle=dashed](0,0)(-3,-5)\psline[linecolor=gray](0,0)(3,-5)\psline[linestyle=dashed](-3,-5)(3,-5)

\pscurve[linestyle=dashed,linecolor=gray](1.6,-1.2)(1.1,-1.2)(0,0)
\pscurve[linestyle=dashed,linecolor=gray](0,0)(0.1,-1.1)(-0.5,-3.1)(-2.4,-4.9)(-3,-5)
\pscurve[linestyle=dashed,linecolor=gray](-3,-5)(-2.8,-5)(-2.1,-5.5)

\pscurve[linestyle=dashed,linecolor=gray](1.6,-0.8)(1,-0.8)(0,0)
\pscurve[linestyle=dashed,linecolor=gray](0,0)(-0.1,-1.1)(-0.8,-3.1)(-2.8,-4.9)(-3,-5)
\pscurve[linestyle=dashed,linecolor=gray](-3,-5)(-2.8,-5)(-2.1,-6.2)

\pscurve(1.5,-1)(1,-1)(0,0)
\pscurve(0,0)(0,-1)(-0.6,-3)(-2.5,-4.8)(-3,-5)
\pscurve(-3,-5)(-2.8,-5)(-2,-6)

\rput(-1,-1){$\mathbb{P}^2$}
\rput(0,0.2){$\infty$}
\rput(-3.2,-5){$0$}
\rput(-2.5,-3){$L_y$}
\rput(0,-5.4){$L_x$}
\rput(2.4,-3){$L_z$}
\rput(0,-3){$D_1$}
}
}

\rput(7,4){
\psscalebox{0.7 0.6}{
\psline(0,0.2)(0,-11.2)
\psline[linestyle=dashed,linecolor=gray](-0.8,0.2)(-0.8,-11.2)
\psline[linestyle=dashed,linecolor=gray](0.8,0.2)(0.8,-11.2)
\psline(-4,0)(4,0)\psline(-4,-11)(4,-11)

\psline(-3,0.5)(-4,-3)\psline(-4,-2)(-3,-5)\psline[linestyle=dashed](-3,-4)(-4,-7)\psline(-4,-6)(-3,-9)\psline(-3,-8)(-4,-11.5)
\psline(3,0.5)(4,-3.1)\psline[linecolor=gray](4,-2.5)(3,-5.8)\psline[linestyle=dashed](3,-5.2)(4,-8.5)\psline(4,-7.9)(3,-11.5)

\rput(4,1){$\tilde{S}_1$}
\rput(0.4,-4){$D_1$}\rput(-0.4,-4){$0$}
\rput(-1.5,0.4){$H_{\infty,1}=E_{\infty,4}$}\rput(-1.5,-0.3){$-1$}
\rput(-4,-1.2){$E_{\infty,3}$}\rput(-3.2,-1.5){$-2$}
\rput(-4,-3.8){$E_{\infty,1}$}\rput(-3.2,-3.4){$-3$}
\rput(-4,-5.2){$L_x$}\rput(-3.2,-5.6){$-1$}
\rput(-4,-8){$E_{0,1}$}\rput(-3.2,-7.4){$-2$}
\rput(-4,-9.8){$E_{0,2}$}\rput(-3.3,-10.2){$-3$}
\rput(-1.5,-11.5){$H_{0,1}=E_{0,4}$}\rput(-1.5,-10.7){$-1$}
\rput(4,-1.2){$E_{\infty,2}$}\rput(3,-1.2){$-3$}
\rput(4,-4.2){$L_z$}\rput(3,-4.2){$-1$}
\rput(4,-7){$L_y$}\rput(3,-7){$-2$}
\rput(4,-10){$E_{0,3}$}\rput(3,-10){$-2$}

\psline(-4,-12.2)(4,-12.2)
\rput(4.5,-12.2){$\mathbb{P}^1$}

\psline{->}(0,-11.5)(0,-12)\rput(0.5,-11.7){$\tilde{\xi}_1$}

}
}
\psline{->}(3,1)(1.5,1)
\end{pspicture}
\caption{Minimal resolution of the pencil $\xi_1:\mathbb{P}^2\dashrightarrow \mathbb{P}^1$. The exceptional divisors $E_{0,i}$ and $E_{\infty,i}$, $i=1,\ldots,4$,  over the respective proper base points $0=[0:0:1]$ and $\infty=[0:1:0]$ of $\xi_1$ are numbered according to the order of their extraction.}
\label{fig:reso1}
\end{figure}
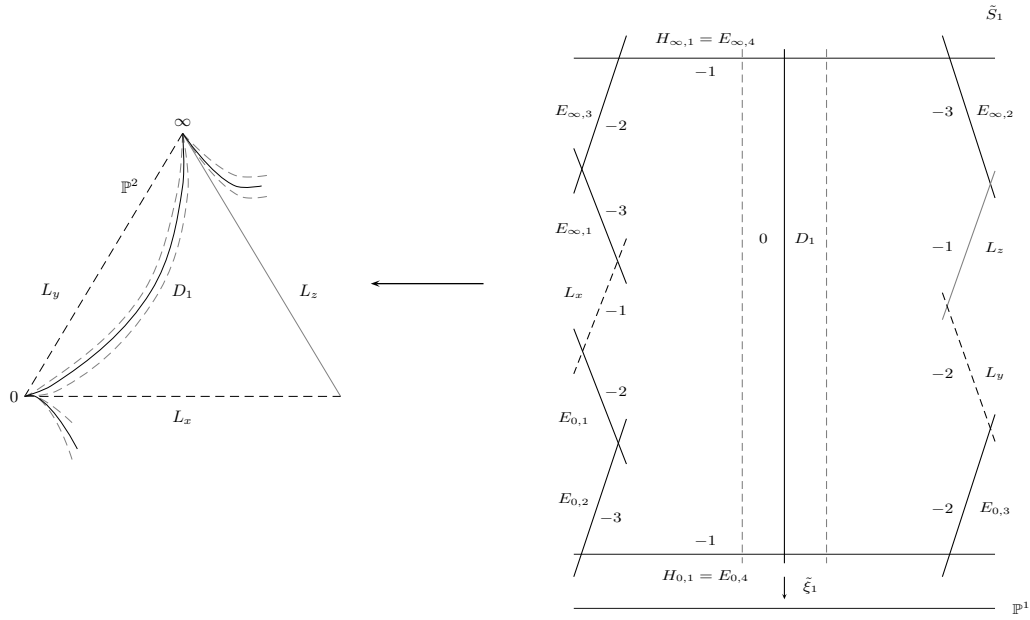

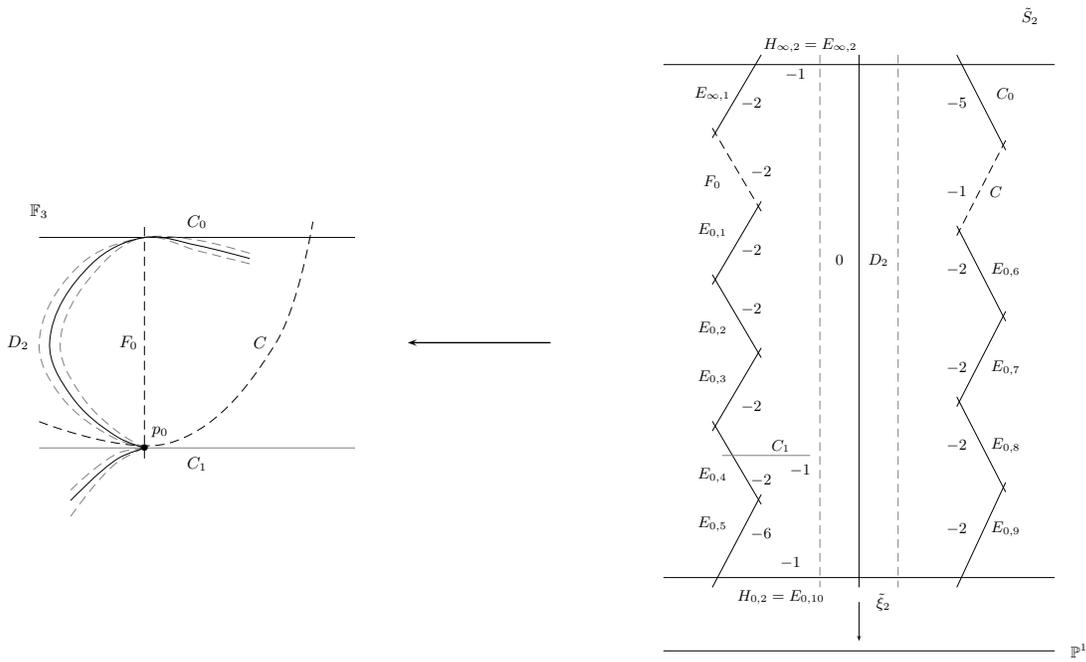
\begin{figure}[!htb]
\psset{linewidth=0.5pt}
\begin{pspicture}(-1.5,3.8)(9,-5)
\rput(-2,0.7){
\psscalebox{0.7}{
\psline(-2,0)(4,0)
\psline[linecolor=gray](-2,-4)(4,-4)
\pscurve[linestyle=dashed](-2,-3.5)(0.5,-3.9)(2.5,-2)(3.2,0.3)
\psline[linestyle=dashed](0,0.2)(0,-4.2)
\pscurve[linestyle=dashed,linecolor=gray](2,-0.3)(1,-0.05)(0,0)(-1.2,-0.5)(-2,-2)(-1.6,-3)(-0.6,-3.8)(0,-4)
\pscurve[linestyle=dashed,linecolor=gray](0,-4)(-0.5,-4.1)(-1.4,-4.8)

\pscurve[linestyle=dashed,linecolor=gray](2,-0.5)(1,-0.25)(0,0)(-0.8,-0.5)(-1.6,-2)(-1.2,-3)(-0.5,-3.7)(0,-4)
\pscurve[linestyle=dashed,linecolor=gray](0,-4)(-0.55,-4.3)(-1.4,-5.3)

\pscurve(2,-0.4)(1,-0.15)(0,0)(-1,-0.5)(-1.8,-2)(-1.4,-3)(-0.5,-3.8)(0,-4)
\pscurve(0,-4)(-0.5,-4.2)(-1.4,-5)

\rput(-2,0.5){$\mathbb{F}_3$}
\rput(1,0.3){$C_0$}
\rput(1,-4.3){$C_1$}
\rput(2.2,-2){$C$}
\rput(-0.3,-2){$F_0$}
\rput(-2.4,-2){$D_2$}
\rput(0,-4){\textbullet}\rput(0.3,-3.7){$p_0$}
}
}
\rput(7.5,3){
\psscalebox{0.65}{
\psline(-4,0)(4,0)
\psline(-4,-10.5)(4,-10.5)
\psline(0,0.2)(0,-10.7)
\psline[linestyle=dashed, linecolor=gray](0.8,0.2)(0.8,-10.7)
\psline[linestyle=dashed, linecolor=gray](-0.8,0.2)(-0.8,-10.7)
\psline(-2,0.2)(-3,-1.5)\psline[linestyle=dashed](-3,-1.3)(-2,-3)\psline(-2,-2.8)(-3,-4.5)\psline(-3,-4.3)(-2,-6)\psline(-2,-5.8)(-3,-7.5)\psline(-3,-7.3)(-2,-9)\psline(-2,-8.8)(-3,-10.7)
\psline[linecolor=gray](-2.8,-8)(-1,-8)
\psline(2,0.2)(3,-1.75)\psline[linestyle=dashed](3,-1.55)(2,-3.5)\psline(2,-3.3)(3,-5.25)\psline(3,-5.05)(2,-7)\psline(2,-6.8)(3,-8.75)\psline(3,-8.55)(2,-10.7)

\rput(3.5,1){$\tilde{S}_2$}
\rput(-1,0.4){$H_{\infty,2}=E_{\infty,2}$} \rput(-1.3,-0.2){$-1$}
\rput(-3,-0.6){$E_{\infty,1}$}\rput(-2.2,-0.8){$-2$}
\rput(-3,-2.4){$F_0$}\rput(-2,-2.2){$-2$}
\rput(-3,-3.4){$E_{0,1}$}\rput(-2.2,-3.8){$-2$}
\rput(-3,-5.4){$E_{0,2}$}\rput(-2.2,-5){$-2$}
\rput(-3,-6.4){$E_{0,3}$}\rput(-2.2,-7){$-2$}
\rput(-3,-8.4){$E_{0,4}$}\rput(-2,-8.5){$-2$}
\rput(-1.6,-7.8){$C_1$}\rput(-1.2,-8.3){$-1$}
\rput(-3,-9.4){$E_{0,5}$}\rput(-2,-9.6){$-6$}
\rput(-1.6,-10.9){$H_{0,2}=E_{0,10}$}\rput(-1.4,-10.2){$-1$}
\rput(3,-0.6){$C_0$}\rput(2,-0.8){$-5$}
\rput(2.8,-2.6){$C$}\rput(2,-2.6){$-1$}
\rput(3,-4.2){$E_{0,6}$}\rput(2,-4.2){$-2$}
\rput(3,-6.2){$E_{0,7}$}\rput(2,-6.2){$-2$}
\rput(3,-7.8){$E_{0,8}$}\rput(2,-7.8){$-2$}
\rput(3,-9.5){$E_{0,9}$}\rput(2,-9.5){$-2$}
\rput(0.4,-4){$D_2$}\rput(-0.4,-4){$0$}

\psline(-4,-12)(4,-12)
\rput(4.5,-12){$\mathbb{P}^1$}

\psline{->}(0,-11)(0,-11.8)\rput(0.5,-11){$\tilde{\xi}_2$}

}
}
\psline{->}(3.4,-0.7)(1.5,-0.7)
\end{pspicture}
\caption{Minimal resolution of the pencil $\xi_2:\mathbb{F}_3\dashrightarrow \mathbb{P}^1$. The exceptional divisors $E_{\infty,1}$, $E_{\infty,2}$ and $E_{0,i}$, $i=1,\ldots,10$,  over the respective proper base points $\infty=F_0\cap C_0$ and $0=p_0$ of $\xi_2$ are numbered according to the order of their extraction.}
\label{fig:reso2}
\end{figure}

\newpage

\begin{parn} The first one $S_{1}$ is equal to the complement in
$\mathbb{P}^{2}=\mathrm{Proj}(\mathbb{C}[x,y,z])$ of the union of
the cuspidal curve $D_{1}=\left\{ x^{5}-y^{2}z^{3}=0\right\} $ and
the line $L_{z}=\{z=0\}$. Equivalently, $S_{1}$ is the complement
in $\mathbb{A}^{2}=\mathrm{Spec}(\mathbb{C}[x,y])=\mathbb{P}^{2}\setminus L_{z}$
of the curve $D_{1}\cap\mathbb{A}^{2}=\{x^{5}-y^{2}=0\}$. This curve
being an orbit with trivial isotropy of the $\mathbb{G}_{m}$-action
$t\cdot(x,y)=(t^{2}x,t^{5}y)$ on $\mathbb{A}^{2}$, the composition
of the inclusion $S_{1}\hookrightarrow\mathbb{A}_{*}^{2}$ with the
canonical morphism $q:\mathbb{A}_{*}^{2}\rightarrow\mathbb{P}[2,5]=[\mathbb{A}_{*}^{2}/\mathbb{G}_{m}]$
defines a locally trivial $\mathbb{A}_{*}^{1}$-bundle $\tilde{\pi}_{1}:S_{1}\rightarrow\mathbb{P}[2,5]\setminus q(D_{1}\cap\mathbb{A}_{*}^{2})\simeq\mathbb{A}^{1}[2,5]$.
The rational pencil $\xi_{1}:\mathbb{P}^{2}\dashrightarrow\mathbb{P}^{1}$
induced by $\pi_{1}=\delta\circ\tilde{\pi}_{1}:S_{1}\rightarrow\mathbb{A}^{1}$
coincide with that generated by the pairwise linearly equivalent divisors
$D_{1}$, $5L_{x}$ and $3L_{z}+2L_{y}$, where $L_{x}$, $L_{y}$
and $L_{z}$ denote the lines $\{x=0\}$, $\{y=0\}$ and $\{z=0\}$
in $\mathbb{P}^{2}$ respectively. A minimal resolution $\tilde{\xi}_{1}:\tilde{S}_{1}\rightarrow\mathbb{P}^{1}$
of $\xi_{1}$ is depicted in Figure \ref{fig:reso1}. A relatively
minimal SNC completion $(\overline{S}_{1},B_{1})$ of $S_{1}$, with
boundary $B_{1}=D_{1}\cup H_{\infty,1}\cup H_{0,1}\cup E_{\infty,3}\cup E_{\infty,1}\cup E_{0,1}\cup E_{0,2}\cup E_{\infty,2}\cup E_{0,3}$,
on which $\pi_{1}$ extends to a $\mathbb{P}^{1}$-fibration $\overline{\pi}_{1}:\overline{S}_{1}\rightarrow\mathbb{P}^{1}$
is then obtained from $\tilde{S}_{1}$ by contracting the proper transform
of $L_{z}$. 

\end{parn}

\begin{parn} The second surface $S_{2}$ is obtained as follows.
In the Hirzebruch surface $\rho:\mathbb{F}_{3}=\mathbb{P}(\mathcal{O}_{\mathbb{P}^{1}}\oplus\mathcal{O}_{\mathbb{P}^{1}}(-3))\rightarrow\mathbb{P}^{1}$
with exceptional section $C_{0}$ of self-intersection $-3$, we choose
a section $C$ of $\rho$ in the linear system $|C_{0}+4F|$, where
$F$ denotes a general fiber of $\rho$, and a section $C_{1}$ in
the linear system $|C_{0}+3F|$ intersecting $C$ with multiplicity
$4$ in a unique point $p_{0}$. The fact that such pairs of sections
exists follows for instance from \cite[Lemma 3.2]{Dub14}. Let $\xi_{2}:\mathbb{F}_{3}\dashrightarrow\mathbb{P}^{1}$
be the pencil generated by the linearly equivalent divisors $C_{0}+5C$
and $6C_{1}+2F_{0}$ where $F_{0}=\overline{\rho}^{-1}(\rho(p_{0}))$.
Let $D_{2}$ be a general member of $\xi$ and let $S_{2}\subset\mathbb{F}_{3}$
be the complement of $C_{0}\cup C_{1}\cup D_{2}$. A minimal resolution
$\tilde{\xi}_{2}:\tilde{S}_{2}\rightarrow\mathbb{P}^{1}$ of $\xi_{2}:\mathbb{F}_{3}\dashrightarrow\mathbb{P}^{1}$
is depicted in Figure \ref{fig:reso2}. A relatively minimal SNC completion
$(\overline{S}_{2},B_{2})$ of $S_{2}$ with boundary $B_{2}=D_{2}\cup H_{\infty,2}\cup H_{0,2}\cup E_{\infty,1}\cup E_{0,5}\cup C_{0}\cup\bigcup_{i=6}^{9}E_{0,i}$,
on which $\pi_{2}$ extends to a $\mathbb{P}^{1}$-fibration $\overline{\pi}_{2}:\overline{S}_{2}\rightarrow\mathbb{P}^{1}$
is then obtained from $\tilde{S}_{2}$ by contracting successively
the proper transform of $C_{1}$, $E_{0,4}$, $E_{0,3}$, $E_{0,2}$
and $E_{0,1}$. By construction, the restriction of $\xi_{2}$ to
$S_{2}$ is an $\mathbb{A}_{*}^{1}$-fibration $\pi_{2}:S_{2}\rightarrow\mathbb{A}^{1}=\mathbb{P}^{1}\setminus\xi(D_{2})$
with two degenerate fibers: one of multiplicity $5$ supported by
$C\cap S_{2}\simeq\mathbb{A}_{*}^{1}$, and one of multiplicity $2$
supported by $F_{0}\cap S_{2}\simeq\mathbb{A}_{*}^{1}$. So by virtue
of $\S$ \ref{par:A1star-StackFactorFibers}, $\pi_{2}$ factors through
a locally trivial $\mathbb{A}_{*}^{1}$-bundle $\tilde{\pi}_{2}:S_{2}\rightarrow\mathbb{A}^{1}[2,5]$. 

\end{parn}
\begin{prop}
The surfaces $S_{1}$ and $S_{2}$ are smooth affine rational and
factorial. They are non isomorphic but $S_{1}\times\mathbb{A}_{*}^{1}$
is isomorphic to $S_{2}\times\mathbb{A}_{*}^{1}$. \end{prop}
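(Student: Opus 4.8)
\emph{Smoothness, rationality, affineness, factoriality.} These are read off the explicit models. The surface $S_{1}$ is the open subset $\mathbb{A}^{2}\setminus\{x^{5}-y^{2}=0\}$ of $\mathbb{A}^{2}$, hence smooth and rational, affine as the complement of a hypersurface, and factorial since $\mathrm{Cl}(S_{1})$ is a quotient of $\mathrm{Cl}(\mathbb{A}^{2})=0$. For $S_{2}=\mathbb{F}_{3}\setminus(C_{0}\cup C_{1}\cup D_{2})$, smoothness and rationality are again clear. Writing divisor classes in the basis $(C_{0},F)$ of $\mathrm{Cl}(\mathbb{F}_{3})\cong\mathbb{Z}^{2}$, one has $[C_{0}]=(1,0)$, $[C_{1}]=(1,3)$ and $[D_{2}]=[C_{0}+5C]=(6,20)$; these three classes already generate $\mathbb{Z}^{2}$ (as $\gcd(3,20)=1$), so $\mathrm{Cl}(S_{2})=0$ and $S_{2}$ is factorial. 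Finally $C_{0}+C_{1}+2D_{2}$ has class $(14,43)$, which is ample on $\mathbb{F}_{3}$ because $43>3\cdot 14$; thus $S_{2}=\mathbb{F}_{3}\setminus\mathrm{Supp}(C_{0}+C_{1}+2D_{2})$ is the complement of the support of an ample effective divisor and so is affine.

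\emph{Isomorphy of the cylinders.} By construction both $\tilde{\pi}_{i}\colon S_{i}\rightarrow\tilde{C}:=\mathbb{A}^{1}[2,5]$ are locally trivial $\mathbb{A}_{*}^{1}$-bundles, and the fibrewise $\mathbb{G}_{m}$-action inherited from the weighted action on $\mathbb{A}^{2}$ (resp.\ $\mathbb{F}_{3}$) identifies $\tilde{C}$ with the quotient stack $[S_{i}/\mathbb{G}_{m}]$, so that $\tilde{\pi}_{i}$ is a $\mathbb{G}_{m}$-torsor over $\tilde{C}$. I then carry out the fibre-product trick of Section \ref{sec:non-Cancel} over this orbifold base: the fibre product $W=S_{1}\times_{\tilde{C}}S_{2}$, being by base change a $\mathbb{G}_{m}$-torsor over the scheme $S_{1}$, is a scheme, and its class lies in $\mathrm{Pic}(S_{1})=0$ by the factoriality just established; hence $\mathrm{pr}_{1}\colon W\stackrel{\sim}{\rightarrow}S_{1}\times\mathbb{A}_{*}^{1}$. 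Symmetrically $W\cong S_{2}\times\mathbb{A}_{*}^{1}$, and therefore $S_{1}\times\mathbb{A}_{*}^{1}\cong S_{2}\times\mathbb{A}_{*}^{1}$.

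\emph{Non-isomorphy.} The plan is to use an orbifold-curve version of Proposition \ref{prop:Bundle-Cancel}. The orbifold $\tilde{C}=\mathbb{A}^{1}[2,5]$ is of log-general type: its orbifold log-canonical divisor has degree $-2+1+(1-\frac{1}{2})+(1-\frac{1}{5})=\frac{3}{10}>0$, so there is no non-constant morphism $\mathbb{A}_{*}^{1}\rightarrow\tilde{C}$, and the argument of Proposition \ref{prop:Bundle-Cancel} then shows that every isomorphism of abstract varieties $\Phi\colon S_{1}\stackrel{\sim}{\rightarrow}S_{2}$ descends to an automorphism $\varphi$ of $\tilde{C}$ identifying $S_{1}$ with $\varphi^{*}S_{2}$ as $\mathbb{A}_{*}^{1}$-bundles, i.e.\ (as in Corollary \ref{cor:IsoClass-logGeneral}) $\varphi^{*}[S_{2}]=\pm[S_{1}]$ in $\mathrm{Pic}(\tilde{C})$. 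Now $\mathrm{Pic}(\tilde{C})\cong\mathbb{Z}/10\mathbb{Z}$ — it is $\mathrm{Pic}(\mathbb{P}[2,5])\cong\mathbb{Z}$ modulo the class $\mathcal{O}(10)$ of the removed scheme-like point — and $\mathrm{Aut}(\tilde{C})$ is trivial, because an orbifold automorphism must fix the two stacky points (their orders $2$ and $5$ being distinct) together with the point at infinity, hence is an automorphism of $\mathbb{P}^{1}$ fixing three points. So an isomorphism $S_{1}\cong S_{2}$ would force $[S_{2}]=\pm[S_{1}]$ in $\mathbb{Z}/10\mathbb{Z}$, and it remains to compute the two classes. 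One finds $[S_{1}]=\pm\overline{1}$, since $S_{1}\rightarrow\tilde{C}$ is the restriction to $\tilde{C}$ of the tautological $\mathbb{G}_{m}$-torsor $\mathbb{A}_{*}^{2}\rightarrow\mathbb{P}[2,5]$, whereas $[S_{2}]=\pm\overline{3}$ by reading off the chains of the two degenerate fibres of $\pi_{2}$ from Figure \ref{fig:reso2} (equivalently, the characters of the local cyclic groups at the two stacky points). Since $\overline{3}\neq\pm\overline{1}$ in $\mathbb{Z}/10\mathbb{Z}$, we conclude $S_{1}\not\cong S_{2}$.

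\emph{Main difficulty.} The genuinely non-formal ingredients are the orbifold refinement of the cancellation criterion (in particular the vanishing of morphisms $\mathbb{A}_{*}^{1}\rightarrow\tilde{C}$ and the descent argument over a stack), and, above all, the explicit computation of $[S_{2}]\in\mathrm{Pic}(\mathbb{A}^{1}[2,5])\cong\mathbb{Z}/10\mathbb{Z}$ from the resolution data of the pencil $\xi_{2}$: establishing that this class equals $\overline{3}$ and not $\overline{1}$ (up to sign) is exactly what separates a true counterexample from two presentations of a single surface.
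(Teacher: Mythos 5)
Your verification that $S_1$ and $S_2$ are smooth, affine, rational and factorial, and your proof that the cylinders are isomorphic via the fibre product $S_1\times_{\mathbb{A}^1[2,5]}S_2$ together with the vanishing of $\mathrm{Pic}(S_i)$, are correct and essentially identical to the paper's arguments (the paper checks factoriality of $S_2$ via the relation $F\sim 7C_1-C_0-D_2$ and affineness via the ampleness of $D_2$ itself; your variants with the classes $(1,0),(1,3),(6,20)$ and the divisor of class $(14,43)$ are equivalent).

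For the non-isomorphy you take a genuinely different route, and it has a real gap. First, you invoke an orbifold version of Proposition \ref{prop:Bundle-Cancel} and of Corollary \ref{cor:IsoClass-logGeneral} over the Deligne--Mumford curve $\mathbb{A}^1[2,5]$: the descent of an abstract isomorphism $S_1\stackrel{\sim}{\rightarrow}S_2$ to an automorphism of the stack (and not merely of the coarse space $\mathbb{A}^1$, which is \emph{not} of log-general type, so the scheme-level version of the criterion does not apply), and the identification of isomorphism classes of total spaces with $\mathrm{Pic}(\mathbb{A}^1[2,5])/(\mathrm{Aut}\times\{\pm1\})$, are both plausible but are established neither in the paper nor in your argument; they require genuine work with torsors over stacks. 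Second, and more seriously, your entire conclusion rests on the claim that $[S_2]=\pm\overline{3}\neq\pm\overline{1}=[S_1]$ in $\mathrm{Pic}(\mathbb{A}^1[2,5])\simeq\mathbb{Z}/10\mathbb{Z}$, which you assert can be ``read off the chains of the degenerate fibres'' and which you yourself single out as the main difficulty. That computation is exactly the point of the proof: without it you cannot exclude that $S_2$ is merely a second presentation of $S_1$, and it is not carried out. The paper avoids all of this by a purely two-dimensional argument: a hypothetical isomorphism $S_1\stackrel{\sim}{\rightarrow}S_2$ extends to a birational map between the relatively minimal SNC completions $(\overline{S}_1,B_1)$ and $(\overline{S}_2,B_2)$; a minimal-resolution argument shows that the only boundary $(-1)$-curves available for contraction would destroy the SNC property, so the map is biregular and induces an isomorphism of pairs; and no such isomorphism exists because the intersection forms of $B_1$ and $B_2$ differ. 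To salvage your approach you would need to prove the orbifold cancellation criterion and actually compute $[S_2]$ in $\mathbb{Z}/10\mathbb{Z}$ from the local $\mu_2$- and $\mu_5$-actions at the two stacky points; otherwise fall back on the completion/intersection-form argument.
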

\begin{proof}
Since $S_{1}$ is a principal open subset of $\mathbb{A}^{2}$, it
is smooth affine rational and factorial. The smoothness and the rationality
of $S_{2}$ are also clear. Since $D_{2}$ belongs to the linear system
$6C_{0}+20F$, it is ample by virtue of \cite[Theorem 2.17]{Ha77}.
This implies in turn that $C_{0}+C_{1}+D_{2}$ is the support of an
ample divisor, whence that $S_{2}$ is affine. Since the divisor class
group of $\mathbb{F}_{3}$ is generated by $C_{0}$ and $F$, the
identity $F\sim7C_{1}-C_{0}-D_{2}$ in the divisor class group of
$\mathbb{F}_{3}$ implies that every Weil divisor on $S_{2}$ is linearly
equivalent to one supported on the boundary $\mathbb{F}_{3}\setminus S_{2}=C_{0}\cup C_{1}\cup D_{2}$.
So $S_{2}$ is factorial. By construction, $S_{1}$ and $S_{2}$ both
have the structure of locally trivial $\mathbb{A}_{*}^{1}$-bundles
$\tilde{\pi}_{i}:S_{i}\rightarrow\mathbb{A}^{1}[2,5]$. The fiber
product $W=S_{1}\times_{\mathbb{A}^{1}[2,5]}S_{2}$ thus inherits
via the first and the second projection respectively the structure
of an \'etale locally trivial $\mathbb{A}_{*}^{1}$-bundle over $S_{1}$
and $S_{2}$. Since $H_{\textrm{ét}}^{1}(S_{i},\mathbb{G}_{m})\simeq H^{1}(S_{i},\mathbb{G}_{m})$
by virtue of Hilbert's Theorem 90 and $S_{i}$ is factorial, it follows
that $W$ is simultaneously isomorphic to the trivial $\mathbb{A}_{*}^{1}$-bundles
$S_{1}\times\mathbb{A}_{*}^{1}$ and $S_{2}\times\mathbb{A}_{*}^{1}$.
It remains to check that $S_{1}$ and $S_{2}$ are not isomorphic.
Suppose on the contrary that there exists an isomorphism $\varphi:S_{1}\stackrel{\sim}{\rightarrow}S_{2}$
and consider its natural extension as a birational map $\varphi:\overline{S}_{1}\dashrightarrow\overline{S}_{2}$
between the smooth SNC completions $(\overline{S}_{1},B_{1})$ and
$(\overline{S}_{2},B_{2})$ of $S_{1}$ and $S_{2}$ constructed above.
Then $\varphi$ must be a biregular isomorphism. Indeed, if either
$\varphi$ or $\varphi^{-1}$, say $\varphi$, is not regular then
we can consider a minimal resolution $\overline{S}_{1}\stackrel{\sigma}{\leftarrow}X\stackrel{\sigma'}{\rightarrow}\overline{S}_{2}$
of it. By definition of the minimal resolution, there is no $(-1)$-curve
in the union $B$ of the total transforms of $B_{1}$ and $B_{2}$
by $\sigma$ and $\sigma'$respectively which is exceptional for $\sigma$
and $\sigma'$ simultaneously, and $\sigma'$ consists of the contraction
of a sequence of successive $(-1)$-curves supported on $B$. The
only possible $(-1)$-curves in $B$ which are not exceptional for
$\sigma$ are the proper transforms of $D_{1}$ and of the two sections
$H_{0,1}$ and $H_{\infty,1}$ of the $\mathbb{P}^{1}$-fibration
$\overline{\pi}_{1}:\overline{S}_{1}\rightarrow\mathbb{P}^{1}$, but
the contraction of any of these would lead to a boundary which would
no longer be SNC, which is excluded by the fact that $B_{2}$ is SNC.
It follows that every isomorphism $\varphi:S_{1}\stackrel{\sim}{\rightarrow}S_{2}$
is the restriction of an isomorphism of pairs $(\overline{S}_{1},B_{1})\stackrel{\sim}{\rightarrow}(\overline{S}_{2},B_{2})$.
But no such isomorphism can exist due the fact that the intersection
forms of the boundaries $B_{1}$ and $B_{2}$ are different. Thus
$S_{1}$ and $S_{2}$ are not isomorphic, which completes the proof. 
\end{proof}

\section{complements and Open questions}

\subsection{Non-cancellation for higher dimensional tori}

Continuing the same idea as in section \ref{sec:non-Cancel} above,
it is possible to construct more generally pairs of principal homogeneous
$\mathbb{T}^{n}$-bundles over a given smooth variety $X$ whose total
spaces become isomorphic after taking their products with $\mathbf{T}^{n}$
but not with any other lower dimensional tori. For instance, one can
start with two collections $\{[p_{1}],\ldots,[p_{n}]\}$ and $\{[q_{1}],\ldots,[q_{n}]\}$
of classes in $H^{1}(X,\mathbb{G}_{m})$ which generate the same sub-group
$G$ of $H^{1}(X,\mathbb{G}_{m})$ and consider a pair of principal
homogeneous $\mathbb{T}^{n}$-bundles $p:P\rightarrow X$ and $q:Q\rightarrow X$
representing the classes $([p_{1}],\ldots,[p_{n}])$ and $([q_{1}],\ldots,[q_{n}])$
in $H^{1}(X,\mathbb{T})\simeq H^{1}(X,\mathbb{G}_{m})^{\oplus n}$.
Since $G$ is contained in the kernels of the natural homomorphism
$p^{*}:H^{1}(X,\mathbb{G}_{m})\rightarrow H^{1}(P,\mathbb{G}_{m})$
and $q^{*}:H^{1}(X,\mathbb{G}_{m})\rightarrow H^{1}(Q,\mathbb{G}_{m})$
(see Lemma \ref{lem:Bundle-Pic}), it follows that as a locally trivial
$\mathbb{\mathbf{T}}^{n}\times\mathbb{\mathbf{T}}^{n}$-bundle over
$X$, $P\times_{X}Q$ is simultaneously isomorphic to $P\times\mathbb{\mathbf{T}}^{n}$
and $Q\times\mathbb{\mathbf{T}}^{n}$. Then again, it remains to make
appropriate choices for $X$, $P$ and $Q$ which guarantee that for
every $n'=0,\ldots,n-1$, $P\times\mathbf{T}^{n'}$ and $Q\times\mathbf{T}^{n'}$
are not isomorphic as abstract algebraic varieties. 
\begin{thm}
\label{thm:Higher-Tori}Let $r\geq2$, let $d\geq r+2$ be a product
$n\geq1$ distinct prime numbers $5\leq d_{1}<\cdots<d_{n}$ and let
$X\subset\mathbb{P}^{r}$ be the complement of a smooth hypersurface
$D$ of degree $d$. Let $[p_{i}],[q_{i}]\in H^{1}(X,\mathbb{G}_{m})\simeq\mu_{d}$,
$i=1,\ldots,n$, be the classes corresponding via the isomorphism
$\mu_{d}\simeq\prod_{i=1}^{n}\mu_{d_{i}}$ to the elements $(1,\ldots,\exp(2\pi/d_{i}),\ldots,1)$
and $(1,\ldots,\exp(2\pi k_{i}/d_{i}),\ldots,1)$ for some $k_{i}\in\{2,\ldots,d_{i}-2\}$
and let $p:P\rightarrow X$ and $q:Q\rightarrow X$ be principal homogeneous
$\mathbb{T}^{n}$-bundles representing respectively the classes $([p_{1}],\ldots,[p_{n}])$
and $([q_{1}],\ldots,[q_{n}])$ in $H^{1}(X,\mathbb{T}^{n})$. 

Then for every $n'=0,\ldots,n-1$, the varieties $P\times\mathbf{T}^{n'}$
and $Q\times\mathbf{T}^{n'}$ are not isomorphic while $P\times\mathbf{T}^{n}$
and $Q\times\mathbf{T}^{n}$ are isomorphic as schemes over $X$. \end{thm}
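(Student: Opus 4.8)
The plan is to transcribe, essentially verbatim, the strategy used for Proposition \ref{thm:Non-Cancel-Gm}, reducing the whole statement to an orbit computation for the natural $\mathrm{GL}_m(\mathbb{Z})$-action on $H^1(X,\mathbb{T}^m)\simeq\mu_d^{\oplus m}$.

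\emph{Isomorphy of the $\mathbf{T}^n$-cylinders.} Since each $d_i$ is prime, both $[p_i]$ (a generator of the factor $\mu_{d_i}$) and $[q_i]$ (its $k_i$-th power, with $k_i$ prime to $d_i$) generate $\mu_{d_i}$, so the families $\{[p_1],\ldots,[p_n]\}$ and $\{[q_1],\ldots,[q_n]\}$ generate the same subgroup $G=\mu_d$ of $H^1(X,\mathbb{G}_m)\simeq\mathrm{Pic}(X)$. As $G$ lies in the kernels of $p^*\colon H^1(X,\mathbb{G}_m)\to H^1(P,\mathbb{G}_m)$ and $q^*\colon H^1(X,\mathbb{G}_m)\to H^1(Q,\mathbb{G}_m)$ (Lemma \ref{lem:Bundle-Pic}), the fibre product $P\times_XQ$, viewed through the first and second projections, is a trivial $\mathbf{T}^n$-bundle over $P$ and over $Q$ respectively; hence $P\times\mathbf{T}^n\simeq P\times_XQ\simeq Q\times\mathbf{T}^n$ as schemes over $X$, exactly as recalled at the start of Section \ref{sec:non-Cancel}.

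\emph{Reduction to an arithmetic question.} Since $d\geq r+2$, the divisor $K_{\mathbb{P}^r}+D\sim(d-r-1)H$ is a positive multiple of a hyperplane section, so $X$ is of log-general type and Proposition \ref{prop:Bundle-Cancel} applies to every Zariski locally trivial $\mathbf{T}^m$-bundle over $X$. Arguing as in the proof of Proposition \ref{thm:Non-Cancel-Gm}, every automorphism of $X$ extends to a linear automorphism of $\mathbb{P}^r$ (otherwise $D$ would be a ruled exceptional divisor of a birational self-map, contradicting the ampleness of $K_D$ forced by $d\geq r+2$), hence fixes $\mathcal{O}_{\mathbb{P}^r}(1)\mid_X$ and acts trivially on $H^1(X,\mathbb{T}^m)\simeq\mathrm{Pic}(X)^{\oplus m}\simeq\mu_d^{\oplus m}$. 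Now $P\times\mathbf{T}^{n'}$ and $Q\times\mathbf{T}^{n'}$ are Zariski locally trivial $\mathbf{T}^{n+n'}$-bundles over $X$ whose classes in $\mu_d^{\oplus(n+n')}$ are $v_{n'}=([p_1],\ldots,[p_n],0,\ldots,0)$ and $w_{n'}=([q_1],\ldots,[q_n],0,\ldots,0)$, so by Corollary \ref{cor:IsoClass-logGeneral} they are isomorphic as abstract varieties if and only if $v_{n'}$ and $w_{n'}$ lie in a common orbit of the natural $\mathrm{GL}_{n+n'}(\mathbb{Z})$-action.

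\emph{The orbit computation and the main obstacle.} Via the Chinese Remainder isomorphism $\mu_d\simeq\prod_{i=1}^n\mathbb{F}_{d_i}$ and the induced factorisation of the $\mathrm{GL}_m(\mathbb{Z})$-action, comparing $v_{n'}$ and $w_{n'}$ amounts to comparing simultaneously, over all $i$, the images $e_i$ and $k_ie_i$ in $\mathbb{F}_{d_i}^{\,n+n'}$ under a single integral matrix whose reduction modulo each $d_i$ must moreover have determinant $\pm1$. The remaining step, which I expect to be the crux, is the purely arithmetic statement that such a matrix exists precisely when $n'\geq n$: for $n'=n$ one writes down an explicit change of basis, compatibly with the isomorphism produced in the first paragraph, while for $n'<n$ one must exhibit an invariant of the $\mathrm{GL}_{n+n'}(\mathbb{Z})$-orbit of $v_{n'}$, finer than its additive order $d$, that separates it from that of $w_{n'}$ — the obstruction arising from the interplay between the number $n$ of "twisted" coordinate directions $e_1,\ldots,e_n$, the bound $n+n'<2n$ on the available coordinates, and the hypothesis $k_i\not\equiv\pm1\pmod{d_i}$, exactly as the case $n'=0$, $m=1$ already forces the two $\{\pm1\}$-orbits in $\mu_{d_1}$ to differ in Proposition \ref{thm:Non-Cancel-Gm}. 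Granting this dichotomy, the non-isomorphy of $P\times\mathbf{T}^{n'}$ and $Q\times\mathbf{T}^{n'}$ for $0\leq n'\leq n-1$ and their isomorphy for $n'=n$ both follow, proving the theorem.
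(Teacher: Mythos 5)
Your proposal reproduces the paper's architecture faithfully: the fibre--product argument for $P\times\mathbf{T}^{n}\simeq Q\times\mathbf{T}^{n}$, and the reduction of the non-isomorphy statements, via Proposition \ref{prop:Bundle-Cancel}, Corollary \ref{cor:IsoClass-logGeneral} and the triviality of the $\mathrm{Aut}(X)$-action on $\mathrm{Pic}(X)$, to the question of whether $v_{n'}=([p_{1}],\ldots,[p_{n}],0,\ldots,0)$ and $w_{n'}=([q_{1}],\ldots,[q_{n}],0,\ldots,0)$ lie in distinct orbits of the $\mathrm{GL}_{n+n'}(\mathbb{Z})$-action on $(\mathbb{Z}/d\mathbb{Z})^{n+n'}$. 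But the entire content of the theorem for $n'<n$ is concentrated in precisely the step you call ``the crux'' and then grant yourself: no invariant separating the two orbits is ever produced. (The paper's own proof does no better; it simply asserts that ``our choices guarantee'' the separation.) As written, the proposal is therefore not a proof.

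More seriously, the dichotomy you are granting fails as soon as $n+n'\geq2$, so this gap cannot be closed. Writing $m=n+n'$ and using the Chinese Remainder decomposition $(\mathbb{Z}/d\mathbb{Z})^{m}\simeq\prod_{i}\mathbb{F}_{d_{i}}^{m}$, both $v_{n'}$ and $w_{n'}$ have nonzero image $e_{i}$, resp. $k_{i}e_{i}$, in each factor $\mathbb{F}_{d_{i}}^{m}$, hence both are unimodular vectors of $(\mathbb{Z}/d\mathbb{Z})^{m}$. For $m\geq2$ the reduction $\mathrm{SL}_{m}(\mathbb{Z})\rightarrow\mathrm{SL}_{m}(\mathbb{Z}/d\mathbb{Z})=\prod_{i}\mathrm{SL}_{m}(\mathbb{F}_{d_{i}})$ is surjective and each $\mathrm{SL}_{m}(\mathbb{F}_{d_{i}})$ acts transitively on $\mathbb{F}_{d_{i}}^{m}\setminus\{0\}$, so all unimodular vectors form a single orbit: the only orbit invariant is the content of the vector, which is the same for $v_{n'}$ and $w_{n'}$. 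Concretely, for $n=2$, $n'=0$, $d_{1}=5$, $d_{2}=7$, $k_{1}=k_{2}=2$, the matrix
\[
A=\left(\begin{array}{cc}
-3 & 7\\
5 & -12
\end{array}\right)\in\mathrm{SL}_{2}(\mathbb{Z})
\]
has first column $\equiv2e_{1}\pmod 5$ and second column $\equiv2e_{2}\pmod 7$, hence maps $([p_{1}],[p_{2}])$ to $([q_{1}],[q_{2}])$; by Corollary \ref{cor:IsoClass-logGeneral} the total spaces $P$ and $Q$ are then already isomorphic as abstract varieties, contradicting the assertion for $n'=0$. So for $n\geq2$ the strategy (and the statement being proved) collapses exactly at the step you deferred; only the case $n=1$, which is Proposition \ref{thm:Non-Cancel-Gm}, goes through. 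A genuine proof would have to either exhibit classes whose $\mathrm{GL}_{m}(\mathbb{Z})$-orbits really differ --- impossible over a cyclic Picard group by the content argument above --- or work over a base whose Picard group carries finer $\mathrm{GL}_{m}(\mathbb{Z})$-invariants.
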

\begin{proof}
Our choices guarantee that for every $0\leq n'<n$, the classes $([p_{1}],\ldots,[p_{n}],[1],\dots[1])$
and $([q_{1}],\ldots,[q_{n}],[1],\dots[1])$ in $H^{1}(X,\mathbb{T}^{n}\times\mathbb{T}^{n'})$
belong to distinct $\mathrm{GL}_{n+n'}(\mathbb{Z})$-orbits. Since
$d\geq r+2$, $X$ is of general type and $\mathrm{Aut}(X)$ acts
trivially on $H{}^{1}(X,\mathbb{G}_{m})$ (see the proof of Theorem
\ref{thm:Non-Cancel-Gm}). So the fact that $P\times\mathbf{T}^{n'}$
and $Q\times\mathbf{T}^{n'}$ are not isomorphic as abstract algebraic
varieties follows again from Corollary \ref{cor:IsoClass-logGeneral}.
On the other hand, since the classes $[p_{1}],\ldots,[p_{n}]$ and
$[q_{1}],\ldots,[q_{n}]$ both generate $H^{1}(X,\mathbb{G}_{m})$,
$P\times\mathbf{T}^{n}$ and $Q\times\mathbf{T}^{n}$ are isomorphic
$X$-schemes by virtue of the previous discussion. Alternatively,
one can observe that choosing $a_{i},b_{i}\in\mathbb{Z}$ such that
$a_{i}k_{i}+b_{i}d_{i}=1$ for every $i=1,\ldots,n$, the following
matrices $A$ and $B$ in $\mathrm{GL}_{2n}(\mathbb{Z})$ 
\[
A=\left(\begin{array}{cccccc}
1 & 0 & 0\\
0 & \ddots & 0 &  & 0_{n}\\
0 & 0 & 1\\
k_{1} & 0 & 0 & 1 & 0 & 0\\
0 & \ddots & 0 & 0 & \ddots & 0\\
0 & 0 & k_{n} & 0 & 0 & 1
\end{array}\right)\qquad B=\left(\begin{array}{cccccc}
a_{1} & 0 & 0 & 1 & 0 & 0\\
0 & \ddots & 0 & 0 & \ddots & 0\\
0 & 0 & a_{n} & 0 & 0 & 1\\
1 & 0 & 0\\
0 & \ddots & 0 &  & 0_{n}\\
0 & 0 & 1
\end{array}\right)
\]
map respectively the classes $([p_{1}],\ldots,[p_{n}],[1],\dots[1])$
and $([q_{1}],\dots[q_{n}],[1],\ldots,[1])$ onto the one $([p_{1}],\ldots,[p_{n}],[q_{1}],\dots[q_{n}])$,
providing isomorphisms $P\times\mathbf{T}^{n}\simeq P\times_{X}Q$
and $Q\times\mathbf{T}^{n}\simeq P\times_{X}Q$ of Zariski locally
trivial $\mathbf{T}^{2n}$-bundles over $X$. 
\end{proof}
The following Lemma relating the Picard group of the total space of
principal homogeneous $\mathbb{T}^{n}$-bundle with the Picard group
of its base is certainly well known. We include it here because of
the lack of appropriate reference. 
\begin{lem}
\label{lem:Bundle-Pic} Let $X$ be a normal variety, let $[p_{1}],\ldots,[p_{n}]$
be a collection of classes in $H^{1}(X,\mathbb{G}_{m})$, and let
$p:P\rightarrow X$ be the principal homogeneous $\mathbb{T}^{n}$-bundle
with class $([p_{1}],\ldots,[p_{n}])\in H^{1}(X,\mathbb{T}^{n})=H^{1}(X,\mathbb{G}_{m})^{\oplus n}$.
Then $H^{1}(P,\mathbb{G}_{m})\simeq H^{1}(X,\mathbb{G}_{m})/G$ where
$G=\langle[p_{1}],\ldots,[p_{n}]\rangle$ is the subgroup generated
by $[p_{1}],\ldots,[p_{n}]$. \end{lem}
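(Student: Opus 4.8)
The plan is to analyze the pullback homomorphism $p^{*}\colon H^{1}(X,\mathbb{G}_{m})\to H^{1}(P,\mathbb{G}_{m})$ and show it is surjective with kernel exactly $G$. First I would reduce to the case $n=1$ by factoring $p\colon P\to X$ as a tower of $n$ successive principal $\mathbb{G}_{m}$-bundles: writing $P=P_{n}\to P_{n-1}\to\cdots\to P_{1}\to P_{0}=X$, where each $P_{j}\to P_{j-1}$ is the principal $\mathbb{G}_{m}$-bundle obtained by pulling back the class $[p_{j}]$ (more precisely, its image in $H^{1}(P_{j-1},\mathbb{G}_{m})$). Each intermediate total space is again normal, since a $\mathbb{G}_{m}$-bundle over a normal variety is normal (it is smooth over the base), so the inductive step applies. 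Composing the resulting short exact sequences then assembles the claim for general $n$, the successive kernels adding up to the subgroup generated by $[p_{1}],\ldots,[p_{n}]$.

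For the case $n=1$, let $p\colon P\to X$ be the principal $\mathbb{G}_{m}$-bundle associated to a line bundle $L$ with class $[p]\in\mathrm{Pic}(X)\simeq H^{1}(X,\mathbb{G}_{m})$; concretely $P=\mathrm{Spec}_{X}\bigl(\bigoplus_{m\in\mathbb{Z}}L^{\otimes m}\bigr)$ is the complement of the zero section in the total space of $L$. I would first establish surjectivity of $p^{*}$: since $P$ is an open subset of the line bundle $\mathbb{V}(L)\to X$, and the latter retracts onto $X$, we have $\mathrm{Pic}(\mathbb{V}(L))\simeq\mathrm{Pic}(X)$; removing the zero section $Z$ (an irreducible divisor) gives a right-exact excision sequence $\mathbb{Z}\cdot[Z]\to\mathrm{Pic}(\mathbb{V}(L))\to\mathrm{Pic}(P)\to 0$, which shows $p^{*}$ is surjective. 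Then for the kernel: $p^{*}[p]=0$ because the pullback of $L$ to $P$ carries the tautological nowhere-vanishing section (the complement of the zero section is precisely where the canonical section of $p^{*}L$ is invertible), so $G=\langle[p]\rangle\subseteq\ker p^{*}$. Conversely, if $p^{*}M\simeq\mathcal{O}_{P}$ for a line bundle $M$ on $X$, then $M$ pulls back to the trivial bundle on $P$, hence (identifying $\mathrm{Pic}(\mathbb{V}(L))=\mathrm{Pic}(X)$) the class of $M$ on $\mathbb{V}(L)$ lies in the image of $\mathbb{Z}\cdot[Z]$ under the excision sequence; since the class of the zero section $Z$ in $\mathrm{Pic}(\mathbb{V}(L))\simeq\mathrm{Pic}(X)$ is exactly $[L]=[p]$ (the zero section is cut out by the tautological section of the pullback of $L^{\vee}$, up to sign conventions), this forces $[M]\in\langle[p]\rangle=G$. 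Hence $\ker p^{*}=G$ and $H^{1}(P,\mathbb{G}_{m})\simeq H^{1}(X,\mathbb{G}_{m})/G$.

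The main obstacle I anticipate is making the excision argument fully rigorous on a merely normal (possibly non-smooth, non-quasiprojective) variety: one must ensure that the divisor class sequence $\mathbb{Z}\cdot[Z]\to\mathrm{Cl}(\mathbb{V}(L))\to\mathrm{Cl}(P)\to 0$ restricts correctly to Picard groups, and that the identification of $[Z]$ with $[p]$ in $\mathrm{Pic}(\mathbb{V}(L))\simeq\mathrm{Pic}(X)$ is done with the right sign. On a normal variety Weil divisor classes need not agree with Cartier ones, so I would phrase everything in terms of line bundles and the homotopy invariance $\mathrm{Pic}(\mathbb{V}(L))\xrightarrow{\sim}\mathrm{Pic}(X)$ (valid for any normal base, since $\mathbb{V}(L)\to X$ is an affine-bundle), together with the localization exact sequence for the Cartier divisor $Z$ — which is Cartier because $\mathbb{V}(L)$ is smooth along $Z$ (it is an $X$-bundle there). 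An alternative, perhaps cleaner route avoiding any geometry: use the Leray or Hochschild–Serre spectral sequence for the $\mathbb{G}_{m}$-torsor $p\colon P\to X$ in the fppf or étale topology, where $R^{1}p_{*}\mathbb{G}_{m}$ contributes the class $[p]$ as the differential $H^{0}(X,\mathbb{Z})=\mathbb{Z}\to H^{1}(X,\mathbb{G}_{m})$ sending $1\mapsto[p]$, giving the exact sequence $0\to H^{1}(X,\mathbb{G}_{m})/\langle[p]\rangle\to H^{1}(P,\mathbb{G}_{m})\to(\text{something killed by normality of }X)$; one then checks the last term vanishes using $H^{0}(P,\mathbb{G}_{m})=H^{0}(X,\mathbb{G}_{m})\cdot\mathbb{Z}$ (invertible functions on a $\mathbb{G}_{m}$-bundle over a normal base are units of $X$ times powers of the fiber coordinate). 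I would present whichever of these is shortest, most likely the direct geometric excision argument, and invoke Hilbert's Theorem 90 to pass freely between $H^{1}(-,\mathbb{G}_{m})$ and $\mathrm{Pic}$.
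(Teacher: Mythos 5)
Your proposal is correct, but it takes a genuinely different route from the paper. The paper's proof is a two-line application of Magid's Picard sequence for the fibration $p:P\rightarrow X$ with fibre $\mathbb{G}_{m}^{n}$: since $H^{1}(\mathbb{G}_{m}^{n},\mathbb{G}_{m})=0$ the pullback $p^{*}$ is surjective, and its kernel is the image of the connecting homomorphism $\delta:H^{0}(\mathbb{G}_{m}^{n},\mathcal{U}_{\mathbb{G}_{m}^{n}})\simeq\mathbb{Z}^{n}\rightarrow H^{1}(X,\mathbb{G}_{m})$, which sends the $i$-th basis vector to $[p_{i}]$ --- this is essentially your second, ``Leray'' alternative, made precise by the units-modulo-constants sheaves $\mathcal{U}_{Y}$. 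Your primary route --- factoring $P\rightarrow X$ as a tower of $n$ principal $\mathbb{G}_{m}$-bundles and, for $n=1$, realizing $P$ as the complement of the zero section $Z$ in $\mathbb{V}(L)$, then combining homotopy invariance of the (divisor class and) Picard group with the localization sequence $\mathbb{Z}\cdot[Z]\rightarrow\mathrm{Cl}(\mathbb{V}(L))\rightarrow\mathrm{Cl}(P)\rightarrow0$ --- is sound and more self-contained, at the cost of exactly the bookkeeping you flag: on a merely normal base the localization sequence lives on $\mathrm{Cl}$, so one must check that the Weil class on $\mathbb{V}(L)$ extending a given line bundle on $P$ is in fact Cartier (this follows by pulling back along a local section of $p$, which exists by Zariski-local triviality), and use $\mathrm{Pic}(X)\hookrightarrow\mathrm{Cl}(X)$ to transfer the kernel computation; normality enters precisely there and in $\mathrm{Cl}(\mathbb{V}(L))\simeq\mathrm{Cl}(X)$. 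In short, the paper's citation of Magid buys brevity and avoids divisors altogether, while your argument buys independence from that reference and makes the geometric reason for $\ker p^{*}=\langle[p_{1}],\ldots,[p_{n}]\rangle$ --- the tautological trivialization of $p^{*}L$ off the zero section --- completely explicit.
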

\begin{proof}
The Picard sequence \cite{Ma75} for the fibration $p:P\rightarrow X$
reads 
\[
0\rightarrow H^{0}(X,\mathbb{\mathcal{U}}_{X})\rightarrow H^{0}(P,\mathbb{\mathcal{U}}_{E})\rightarrow H^{0}(\mathbb{G}_{m}^{n},\mathbb{\mathcal{U}}_{\mathbb{G}_{m}^{n}})\stackrel{\delta}{\rightarrow}H^{1}(X,\mathbb{G}_{m})\rightarrow H^{1}(P,\mathbb{G}_{m})\rightarrow H^{1}(\mathbb{G}_{m}^{n},\mathbb{G}_{m})=0
\]
where for a variety $Y$, $\mathcal{U}_{Y}$ denotes the sheaf cokernel
of the homomorphism $\mathbb{C}_{Y}^{*}\rightarrow\mathbb{G}_{m,Y}$
from the constant sheaf $\mathbb{C}^{*}$ on $Y$ to the sheaf $\mathbb{G}_{m,Y}$
of germs invertible functions on $Y$. We may choose a basis $(e_{1},\ldots,e_{n})$
of $H^{0}(\mathbb{G}_{m}^{n},\mathbb{\mathcal{U}}_{\mathbb{G}_{m}^{n}})\simeq\mathbb{Z}^{n}$
in such a way that the connecting homomorphism $\delta$ maps $e_{i}$
to $[g_{i}]$ for every $i=1,\ldots,n$. The assertion follows. 
\end{proof}

\subsection{Non Cancellation for smooth factorial affine varieties of low Kodaira
dimension ?}

Recall that by \cite[Theorem 3]{IiFu77}, cancellation for $\mathbf{T}^{n}$
holds over smooth affine varieties of log-general type. On the other
hand, since they arise as Zariski locally trivial $\mathbb{A}_{*}^{1}$-bundles
over varieties of log-general type, it follows from Iitaka \cite{IiFu77}
and Kawamata-Viehweg \cite{Kaw78} addition theorems that all the
counter-examples $X$ constructed in subsection \ref{sub:Torus-NonCancel-dim3}
have Kodaira dimension $\dim X-1\geq2$. Similarly, the examples constructed
in Theorem \ref{thm:Higher-Tori} as well as their products by low
dimensional tori have Kodaira dimension at least $2$. One can also
check directly that the two surfaces constructed in subsection \ref{sub:Smooth-surfaces}
have Kodaira dimension equal to $1$. This raises the question whether
cancellation holds for smooth factorial affine varieties of low Kodaira
dimension, in particular for varieties of negative Kodaira dimension.
The following Proposition shows that if counter-examples exist, they
must be at least of dimension $3$: 
\begin{prop}
Let $S$ and $S'$ be smooth factorial affine surfaces. If $S\times\mathbb{A}_{*}^{1}$
and $S'\times\mathbb{A}_{*}^{1}$ are isomorphic and $\kappa(S)$
(or, equivalently, $\kappa(S')$) is not equal to $1$, then $S$
and $S'$ are isomorphic.\end{prop}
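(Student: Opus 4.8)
The plan is to argue according to the value of the logarithmic Kodaira dimension. First I would observe that $\kappa(S)=\kappa(S')$: since $S\times\mathbb{A}_{*}^{1}\simeq S'\times\mathbb{A}_{*}^{1}$, the easy addition formula $\kappa(V\times W)=\kappa(V)+\kappa(W)$ together with $\kappa(\mathbb{A}_{*}^{1})=0$ gives $\kappa(S)=\kappa(S\times\mathbb{A}_{*}^{1})=\kappa(S'\times\mathbb{A}_{*}^{1})=\kappa(S')$. As the Kodaira dimension of a smooth affine surface lies in $\{-\infty,0,1,2\}$ and the value $1$ is excluded by hypothesis, it remains to treat $\kappa(S)\in\{2,0,-\infty\}$. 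The case $\kappa(S)=2$ is immediate: then $S$ and $S'$ are of log-general type, hence not $\mathbb{A}_{*}^{1}$-uniruled, and the Remark following Proposition~\ref{prop:Bundle-Cancel}, applied with $n=1$, shows that every isomorphism $\Phi\colon S\times\mathbb{A}_{*}^{1}\stackrel{\sim}{\rightarrow}S'\times\mathbb{A}_{*}^{1}$ descends to an isomorphism between $S$ and $S'$. The same argument settles the case in which $S$ fails to be $\mathbb{A}_{*}^{1}$-uniruled, so in what follows I assume $\kappa(S)\leq0$ and that $S$, hence also $S'$, is $\mathbb{A}_{*}^{1}$-uniruled.

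Next I would treat $\kappa(S)=0$. Here $S$ cannot carry an $\mathbb{A}^{1}$-fibration over an affine curve, for such a fibration forces $\kappa(S)=-\infty$; so by the structure theory of affine surfaces \cite{MiBook} it carries an $\mathbb{A}_{*}^{1}$-fibration over an affine curve, which by $\S$~\ref{par:A1star-StackFactorFibers} factors through a locally trivial $\mathbb{A}_{*}^{1}$-bundle over an orbifold curve $\tilde{C}$. The additivity of the Kodaira dimension for this bundle yields $\kappa(\tilde{C})=\kappa(S)=0$, and a direct computation of the degree of the logarithmic canonical divisor of $\tilde{C}$ leaves only two possibilities: $\mathbb{A}^{1}$ carrying two orbifold points of order $2$, or $\mathbb{A}_{*}^{1}$ with no orbifold point. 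The first would force a non-trivial $2$-torsion class in $\mathrm{Pic}(S)$, which is impossible since $S$ is factorial; in the second case factoriality forces the $\mathbb{A}_{*}^{1}$-bundle over $\mathbb{A}_{*}^{1}$ to be untwisted, and since $\mathrm{Pic}(\mathbb{A}_{*}^{1})=0$ it is then trivial, so $S\simeq\mathbb{A}_{*}^{1}\times\mathbb{A}_{*}^{1}=\mathbf{T}^{2}$. Consequently $S'\times\mathbb{A}_{*}^{1}\simeq\mathbf{T}^{3}$, whence $S'\simeq\mathbf{T}^{2}\simeq S$ by the elementary cancellation for algebraic tori recalled in the introduction.

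The remaining case is $\kappa(S)=-\infty$, where $S$ and $S'$ are $\mathbb{A}^{1}$-ruled. I would write $\pi\colon S\rightarrow B$ for an $\mathbb{A}^{1}$-fibration over a smooth affine curve $B$ with connected fibres. Factoriality, i.e. $\mathrm{Pic}(S)=0$, forces (via a rational section) $\mathrm{Pic}(B)=0$, so $B$ is rational, and it rules out any degenerate fibre, since a reducible or multiple fibre would contribute a non-trivial class to $\mathrm{Pic}(S)$; thus by the local structure of $\mathbb{A}^{1}$-fibrations \cite{MiBook} the map $\pi$ is a Zariski locally trivial $\mathbb{A}^{1}$-bundle over a rational affine curve, hence trivial, and $S\simeq C\times\mathbb{A}^{1}$ with $C=B$ the complement of finitely many points in $\mathbb{A}^{1}$; likewise $S'\simeq C'\times\mathbb{A}^{1}$. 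It then suffices to recover $C$ from $W:=S\times\mathbb{A}_{*}^{1}\simeq C\times\mathbb{A}^{1}\times\mathbb{A}_{*}^{1}$. If $C$ is not of log-general type it is isomorphic to $\mathbb{A}^{1}$ or to $\mathbb{A}_{*}^{1}$ according to the rank of $\Gamma(S,\mathcal{O}_{S})^{*}/\mathbb{C}^{*}$, which equals $\mathrm{rank}(\Gamma(W,\mathcal{O}_{W})^{*}/\mathbb{C}^{*})-1$ and is thus read off from $W$. If $C$ is of log-general type, I would use instead that no smooth affine curve of log-general type is dominated by $\mathbb{A}^{1}$ or by $\mathbb{A}_{*}^{1}$: any dominant morphism from $W$ to such a curve is then constant along each line in the $\mathbb{A}^{1}$- and $\mathbb{A}_{*}^{1}$-directions, hence factors through $\mathrm{pr}_{C}\colon W\rightarrow C$, so that $\mathrm{pr}_{C}$ is the universal morphism from $W$ to a smooth affine curve of log-general type and $C$ is determined by $W$ up to isomorphism. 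In every case $C\simeq C'$, and therefore $S\simeq C\times\mathbb{A}^{1}\simeq C'\times\mathbb{A}^{1}\simeq S'$.

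The step I expect to be the main obstacle is the explicit identification of $S$ in the cases $\kappa(S)\leq0$: one has to extract from the sole hypothesis $\mathrm{Pic}(S)=0$ exactly enough rigidity to exclude reducible and multiple fibres of an $\mathbb{A}^{1}$-fibration, and to exclude the extra orbifold points and the twisting phenomena that an $\mathbb{A}_{*}^{1}$-bundle would otherwise allow, so as to land on the concrete models $C\times\mathbb{A}^{1}$ and $\mathbf{T}^{2}$. Once these models are available the cancellation itself is straightforward, the only mildly delicate point being the recovery of a factor $C$ of log-general type, for which the rigidity of morphisms into curves of log-general type takes the place of the usual argument comparing invertible functions.
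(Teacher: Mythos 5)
Your treatment of the cases $\kappa=2$ and $\kappa=-\infty$ is essentially sound: the first is exactly the Remark following Proposition \ref{prop:Bundle-Cancel}, and in the second you re-derive in this special situation the Gurjar--Paul description $S\simeq C\times\mathbb{A}^{1}$ and then recover $C$ from the cylinder by a slightly different but workable route (rigidity of morphisms to a log-general type curve, resp. counting independent units, in place of the paper's appeal to Iitaka--Fujita cancellation for $\mathbb{A}^{1}$ followed by curve cancellation for $\mathbb{A}_{*}^{1}$). The fatal problem is the case $\kappa(S)=0$. Your conclusion that factoriality forces $S\simeq\mathbf{T}^{2}$ is false: the surfaces $V_{k}\subset\mathbb{F}_{k}$, $k\geq1$, appearing in the paper's own proof (complements of two sections meeting in one point together with a fiber, taken from the classification in \cite{GP12}) are smooth, affine, factorial, of Kodaira dimension $0$, carry non-constant units, and are not isomorphic to $\mathbf{T}^{2}$ (each contains exactly two curves isomorphic to $\mathbb{A}^{1}$, while $\mathbf{T}^{2}$ contains none). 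Two steps of your reduction break down on these examples. First, their $\mathbb{A}_{*}^{1}$-fibration over $\mathbb{A}_{*}^{1}$ has a degenerate fiber whose reduction is a union of two affine lines, not an $\mathbb{A}_{*}^{1}$; the factorization through a locally trivial bundle over an orbifold curve in $\S$\ref{par:A1star-StackFactorFibers} is only available when every fiber is irreducible with reduction $\mathbb{A}_{*}^{1}$, so your list of possible orbifold bases does not exhaust the possibilities. Second, the claim that an orbifold point of order $2$ (equivalently a multiple fiber) would produce torsion in $\mathrm{Pic}(S)$ is unjustified: if $\pi^{-1}(c)=mF$ then both $F$ and $mF$ can be principal with no contradiction, and indeed the factorial surfaces $S_{1},S_{2}$ of subsection \ref{sub:Smooth-surfaces} have multiple fibers of multiplicities $2$ and $5$.

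As a consequence, the genuinely hard content of the proposition is absent from your argument: one must show that the surfaces $V_{k}$, $k\geq0$, have pairwise non-isomorphic $\mathbb{A}_{*}^{1}$-cylinders. The paper does this by observing that any isomorphism $V_{k}\times\mathbb{A}_{*}^{1}\simeq V_{k'}\times\mathbb{A}_{*}^{1}$ must preserve the (finitely many) curves isomorphic to $\mathbb{A}^{1}$, hence the degenerate fibers, hence restricts over the complement where the fibrations are trivial $\mathbb{A}_{*}^{1}$-bundles over the log-general type curve $\mathbb{A}_{*}^{1}\setminus\{1\}$; Iitaka--Fujita then makes the isomorphism compatible with the induced $\mathbf{T}^{2}$-fibrations, and the multiplicities of the components of the degenerate fibers distinguish the $V_{k}$. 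You would also need to treat separately the $\kappa=0$ surfaces all of whose units are constant (where cancellation follows from the unit-comparison argument of the introduction), since your purported identification with $\mathbf{T}^{2}$ silently excludes them. Without these ingredients the $\kappa=0$ case, which is where the substance of the statement lies, is not proved.
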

\begin{proof}
In view of Iitaka-Fujita strong cancellation Theorem \cite{IiFu77},
we only have to consider the cases where $\kappa(S)=\kappa(S')=-\infty$
or $0$. In the first case, $S$ and $S'$ are isomorphic to products
of punctured smooth affine rational curves with $\mathbb{A}^{1}$
(see. e.g. \cite{GP12}) and so, the assertion follows from a combination
of the existing positive results for cancellation by $\mathbb{A}^{1}$
and $\mathbb{A}_{*}^{1}$. Namely, let $S=C\times\mathbb{A}^{1}$
and $S'=C'\times\mathbb{A}^{1}$, where $C$ and $C'$ are punctured
affine lines. If either $C$ or $C'$, say $C$, is not isomorphic
to $\mathbb{A}^{1}$, then $\kappa(C\times\mathbb{A}_{*}^{1})=\kappa(C)\geq0$
and so, by Iitaka-Fujita strong cancellation for $\mathbb{A}^{1}$,
every isomorphism between $S\times\mathbb{A}_{*}^{1}$ and $S'\times\mathbb{A}_{*}^{1}$
descends to an isomorphism between $C\times\mathbb{A}_{*}^{1}$ and
$C'\times\mathbb{A}_{*}^{1}$. Since cancellation by $\mathbb{A}_{*}^{1}$
holds for smooth affine curves, we deduce in turn that $C$ and $C'$
are isomorphic, whence that $S$ and $S'$ are isomorphic. 

In the case where $\kappa(S)=\kappa(S')=0$, we already observed that
cancellation holds if every invertible function on $S$ or $S'$ is
constant. Therefore, we may assume that $S$ and $S'$ both have non
constant units whence, by virtue of \cite[§5]{GP12}, belong up to
isomorphism to the following list of surfaces: $V_{0}=\mathbb{A}_{*}^{1}\times\mathbb{A}_{*}^{1}$,
and the complements $V_{k}$ in the Hirzebruch surfaces $\rho_{k}:\mathbb{F}_{k}\rightarrow\mathbb{P}^{1}$,
$k\geq1$, of a pair of sections $H_{0,k}$ and $H_{\infty,k}$ of
$\rho_{k}$ with self-intersection $k$ intersecting each others in
a unique point $p_{k}$, and a fiber $F$ or $\rho_{k}$ not passing
through $p_{k}$. All the surfaces $V_{k}$, $k\geq1$, admit an $\mathbb{A}_{*}^{1}$-fibration
$\pi_{k}:V_{k}\rightarrow\mathbb{A}_{*}^{1}=\mathrm{Spec}(\mathbb{C}[x^{\pm1}])$
induced by the restriction of the pencil on $\mathbb{F}_{k}$ generated
by $H_{0,k}$ and $H_{\infty,k}$. The unique degenerate fiber of
$\pi_{k}$, say $\pi_{k}^{-1}(1)$ up to a linear change of coordinate
on $\mathbb{A}_{*}^{1}$, is irreducible, consisting of the union
of the intersection with $V_{k}$ of the exceptional section $C_{0,k}$
of $\rho_{k}$, and of the fiber $F_{k}$ of $\rho_{k}$ passing through
$p_{k}$, counted with multiplicity $k$. Note that $V_{0}$ does
not contain any curve isomorphic to $\mathbb{A}^{1}$ whereas each
surface $V_{k}$, $k\geq1$, contains exactly two such curves: the
intersections $F_{k}\cap V_{k}$ and $C_{0,k}\cap V_{k}$. It follows
in particular that $V_{0}\times\mathbb{A}_{*}^{1}$ cannot be isomorphic
to any $V_{k}\times\mathbb{A}_{*}^{1}$, $k\geq1$. Now suppose that
there exists an isomorphism $\Phi:V_{k}\times\mathbb{A}_{*}^{1}\stackrel{\sim}{\rightarrow}V_{k'}\times\mathbb{A}_{*}^{1}$
for some $k,k'\geq1$. Since $\kappa((F_{k}\cap V_{k})\times\mathbb{A}_{*}^{1})=\kappa((C_{0,k}\cap V_{k})\times\mathbb{A}_{*}^{1})=-\infty$,
their respective images by $\Phi$ cannot be mapped dominantly on
$V_{k'}$ by the first projection and since $F_{k'}\cap V_{k'}$ and
$C_{0,k'}\cap V_{k'}$ are the unique curves isomorphic to $\mathbb{A}^{1}$
on $V_{k'}$, we conclude similarly as in the proof of Proposition
\ref{prop:Bundle-Cancel} that $\Phi$ map $(\pi_{k}^{-1}(1))_{\mathrm{red}}\times\mathbb{A}_{*}^{1}$
isomorphically onto $(\pi_{k'}^{-1}(1))_{\mathrm{red}}\times\mathbb{A}_{*}^{1}$.
This implies in turn that $\Phi$ restricts to an isomorphism between
the open subsets $U_{k}=\pi_{k}^{-1}(\mathbb{A}_{*}^{1}\setminus\left\{ 1\right\} )\times\mathbb{A}_{*}^{1}$
and $U_{k'}=\pi_{k'}^{-1}(\mathbb{A}_{*}^{1}\setminus\left\{ 1\right\} )\times\mathbb{A}_{*}^{1}$
of $V_{k}\times\mathbb{A}_{*}^{1}$ and $V_{k'}\times\mathbb{A}_{*}^{1}$
respectively. Now $\mathbb{A}_{*}^{1}\setminus\left\{ 1\right\} $
is of log-general tpe and since the restrictions of $\pi_{k}$ and
$\pi_{k'}$ to $U_{k}$ and $U_{k'}$ are trivial $\mathbb{A}_{*}^{1}$-bundles,
we deduce from Iitaka-Fujita strong cancellation Theorem \cite{IiFu77}
that the restriction of $\Phi$ to $U_{k}$ descends to an isomorphism
$\varphi:\mathbb{A}_{*}^{1}\setminus\left\{ 1\right\} \stackrel{\sim}{\rightarrow}\mathbb{A}_{*}^{1}\setminus\left\{ 1\right\} $
for which the following diagram commutes \[\xymatrix{U_k\times \mathbb{A}^1_* \simeq (\mathbb{A}^1_*\setminus\{1\})\times \mathbb{A}^1_* \times \mathbb{A}^1_* \ar[r]^{\Phi} \ar[d]_{\pi_k \circ \mathrm{pr}_1} & (\mathbb{A}^1_*\setminus\{1\})\times \mathbb{A}^1_* \times \mathbb{A}^1_* \simeq U_{k'} \times \mathbb{A}^1_* \ar[d]^{\pi_{k'}\circ \mathrm{pr}_1} \\ \mathbb{A}^1_*\setminus\{1\} \ar[r]^{\varphi} & \mathbb{A}^1_*\setminus\{1\}.}\] Summing
up, if it exists, an isomorphism $\Phi:V_{k}\times\mathbb{A}_{*}^{1}\stackrel{\sim}{\rightarrow}V_{k'}\times\mathbb{A}_{*}^{1}$
must be compatible with the $\mathbf{T}^{2}$-fibrations $\pi_{k}\circ\mathrm{pr}_{1}:V_{k}\times\mathbb{A}_{*}^{1}\rightarrow\mathbb{A}_{*}^{1}$
and $\pi_{k'}\circ\mathrm{pr}_{1}:V_{k'}\times\mathbb{A}_{*}^{1}\rightarrow\mathbb{A}_{*}^{1}$.
But this impossible since the mutliplicity of the irreducible component
$F_{k}\cap V_{k}$ of $\pi_{k}^{-1}(1)$ is different for every $k\geq1$.
In conclusion, the surfaces $V_{k}$, $k\geq0$, are pairwise non
isomorphic, with pairwise non isomorphic $\mathbb{A}_{*}^{1}$-cylinders,
which completes the proof.  
\end{proof}
\bibliographystyle{amsplain}

\end{document}